\newtheorem{lemma}{Lemma}[section]
\newtheorem{theorem}{Theorem}[section]
\newtheorem{proposition}{Proposition}[section]
\newtheorem{assumption}{Assumption}
\newtheorem{corollary}{Corollary}[section]
\newcommand{\E}{\mathbb{E}}
\title{Rough fractional diffusions as scaling limits of nearly unstable heavy tailed Hawkes processes}
\author{Thibault Jaisson\\ CMAP, \'Ecole Polytechnique Paris \\ thibault.jaisson@polytechnique.edu\\$~~$\\
Mathieu Rosenbaum\\ LPMA, Universit\'e Pierre et Marie Curie (Paris 6)\\
  mathieu.rosenbaum@upmc.fr}
\begin{document}

\maketitle

\begin{abstract}
\noindent We investigate the asymptotic behavior as time goes to infinity of Hawkes processes whose regression kernel has $L^1$ norm close to one and power law tail of the form $x^{-(1+\alpha)}$, with $\alpha\in(0,1)$. We in particular prove that when $\alpha\in(1/2,1)$, after suitable rescaling, their law converges to that of a kind of integrated fractional Cox-Ingersoll-Ross process, with associated Hurst parameter $H=\alpha-1/2$. This result is in contrast to the case of a regression kernel with light tail, where a classical Brownian CIR process is obtained at the limit. Interestingly, it shows that persistence properties in the point process can lead to an irregular behavior of the limiting process. This theoretical result enables us to give an agent-based foundation to some recent findings about the rough nature of volatility in financial markets.
\end{abstract}

\noindent \textbf{Keywords:} Hawkes processes, limit theorems, nearly unstable processes, heavy tail, fractional stochastic equation, fractional Cox-Ingersoll-Ross process, volatility, long memory.

\section{Introduction}\label{intro}

\noindent A Hawkes process $(N_t)_{t\geq 0}$ is a self-exciting point process whose intensity at time $t$, denoted by $\lambda_t$, is of the form 
$$\lambda_t=\mu+\sum_{0<J_i<t}\phi(t-J_i)=\mu+\int_{(0,t)}\phi(t-s)dN_s,$$ where $\mu$ is a positive real number, $\phi$ a non-negative measurable function and the $J_i$ are the points of the process before time $t$ (see Section \ref{assumpt} for a more formal definition).
These processes have been introduced in the early seventies by Hawkes, see \cite{hawkes1971point,hawkes1971spectra,hawkes1974cluster}, in the purpose of modeling earthquakes and their aftershocks, see \cite{adamopoulos1976cluster} for such application. In the last years, the probabilistic and statistical analysis of Hawkes processes has known several interesting developments, driven by the recent use of Hawkes processes in various applied fields such as neurosciences \cite{chornoboy1988maximum,pernice2011structure,pernice2012recurrent,reynaud2013inference}, sociology \cite{blundell2012modelling,li2013dyadic,zhou2013learning}, criminology \cite{mohler2013modeling,mohler2011self}, genome analysis \cite{reynaud2010adaptive} and mostly finance \cite{ait2010modeling,bacry2013modelling,bauwens2004dynamic,bowsher2007modelling,chavez2005point,embrechts2011multivariate,errais2010affine}.\\

\noindent Among the probabilistic questions raised by Hawkes processes, particular attention has been devoted to the study of their long term scaling limits. More precisely, one wishes to understand the behavior as $T$ tends to infinity of the process 
$$\alpha_T(N_{tT}),~t\in[0,1],$$ where $\alpha_T$ is a suitable normalizing factor. In \cite{bacry2013some}, it is shown that under the condition
$$\|\phi\|_{1}=\int_0^{+\infty}\phi(s)ds< 1,$$ the asymptotic behavior of a Hawkes process is quite similar to that of a Poisson process. Indeed, as $T$ tends to infinity, 
$$\underset{t\in[0,1]}{\text{sup}}\big|\frac{N_{tT}}{T}-\E\big[\frac{N_{tT}}{T}\big]\big|\rightarrow 0,$$ in probability and 
$$\Big(\sqrt{T}\big(\frac{N_{tT}}{T}-\E\big[\frac{N_{tT}}{T}\big]\big)\Big)_{t\in[0,1]}\rightarrow \sigma (W_t)_{t\in[0,1]},$$ in law for the Skorohod topolgy, with $\sigma$ an explicit constant and $(W_t)$ a Brownian motion. This result has been extended in \cite{zhu2013nonlinear} to the case of non-linear Hawkes processes.\\

\noindent The condition $\|\phi\|_{1}<1$ is essential in order to obtain the preceding result. It is actually very similar to the assumption $|\rho|<1$ one makes on the autoregressive coefficient $\rho$ when working with a discrete time stationary AR(1) process. In particular, when starting the Hawkes process at $t=-\infty$, the assumption $\|\phi\|_{1}<1$ is required in order to get a stationary intensity with finite first moment. Also, as for AR(1) processes, under this condition, Hawkes processes only exhibit weak dependence properties. Consequently, their asymptotic behavior is in that case no surprise, close to that of a Poisson process. Hence this condition is called stability condition.\\

\noindent In \cite{jaisson2013limit}, the authors investigate the scaling limit of Hawkes processes when the stability condition is almost violated. This means
they consider a sequence of Hawkes processes satisfying the stability condition, but for which the kernel $\phi=\phi^T$ also depends on the observation scale $T$, such that $\|\phi^T\|_{1}$ tends to $1$ as $T$ goes to infinity. Such a sequence is called sequence of nearly unstable Hawkes processes.\\

\noindent Beyond its obvious mathematical interest, considering the case of nearly unstable Hawkes processes is motivated by empirical studies on financial data. Indeed, it has become quite standard to model the clustered nature of order flows on financial markets by means of Hawkes processes. However, one systematically estimates $L^1$ norms for the regression kernels which are smaller but very close to $1$, see \cite{filimonov2012quantifying,filimonov2013apparent,hardiman2013critical,lallouache2014statistically}. Interestingly, this empirical stylized fact that Hawkes processes have to be nearly unstable to fit the data has a very natural financial interpretation, namely the high degree of endogeneity of modern markets due to high frequency trading. This signifies that a large proportion of orders is just endogenously triggered by other orders, see \cite{filimonov2013apparent,hardiman2013critical,jaisson2013limit} for more details. In this framework, it is proved in \cite{jaisson2013limit} that the limiting law of a sequence of nearly unstable Hawkes processes is that of an integrated Cox-Ingersoll-Ross process (CIR process for short). Hence, compared to the case where the stability condition is in force, the asymptotic behavior at first order is no longer deterministic, see also \cite{zhu2013nonlinear} for the case where $\|\phi\|_{1}$ is exactly equal to one and other interesting developments. Note that this CIR scaling limit seems to be very consistent with financial practice. Indeed, it is widely acknowledged that there exists a linear relationship between the cumulated order flow and the integrated squared volatility, see for example \cite{wyart2008relation}, and CIR processes are very classical models for the squared volatility.\\

\noindent Nevertheless, the CIR limit in law of nearly unstable Hawkes processes discussed above is obtained under the crucial assumption
$$\int_0^{+\infty}s\phi(s)ds<+\infty.$$ It is therefore quite natural to try to extend the results of \cite{jaisson2013limit} to the case of {\it nearly unstable heavy tailed Hawkes processes}, for which this condition is no longer satisfied. Hence we consider in this paper the situation where
$$\phi(x)\underset{x\to+\infty}{\sim}\frac{K}{x^{1+\alpha}},$$
where $\alpha\in(0,1)$ and $K$ is a positive constant. This setting is actually much more in agreement with financial data, where one not only finds that the function $\phi$ has an $L^1$ norm close to one, but also that it has a power law tail, see \cite{bacry2014estimation,hardiman2013critical}. This heavy tail is quite easy to interpret in practice too: it is related to the persistence of the signed order flow (the series of $+1$, $-1$ where $+1$ represents a buy order and $-1$ a sell order). Indeed, the long memory property of this process is well established and is due to the so-called order splitting phenomenon: most orders are actually part of large orders (called metaorders), which are split in smaller orders so that prohibitive execution costs can be avoided.\\

\noindent Our main result is that for $\alpha\in(1/2,1)$, after proper rescaling, the law of a nearly unstable heavy tailed Hawkes process converges to that of a process which can be interpreted as an integrated fractional diffusion. Loosely speaking, this limiting distribution can be viewed as the integral of a fractional version of the CIR process, where a fractional Brownian motion replaces the ordinary Brownian motion. This result is quite remarkable from a probabilistic point of view. Indeed, assuming fat tail leads to a limit which is not an integrated semi-martingale. This is in strong contrast to all other scaling limits obtained for Hawkes
processes. Technically, this heavy tail case is of course more subtle than that investigated in \cite{jaisson2013limit} where semi-martingale theorems are used in a quite direct manner. Moreover, Gaussian methods are not easy to apply in our context since the limit is not a simple Gaussian functional, although it somehow involves a fractional Brownian motion.\\

\noindent The perhaps most surprising phenomenon obtained in our result is the value of the Hurst parameter $H$ of the (sort of) fractional Brownian motion appearing in the limit. Indeed, fat tail meaning persistence, one would expect getting also persistence in the limit and so $H>1/2$. This is actually the contrary: an aggregation phenomenon occurs in the heavy tail case, leading to a very irregular process in the limit, its derivative behaving as a fractional Brownian motion with Hurst parameter $H<1/2$. Coming back to financial applications, this means that in practice, the volatility process should be very irregular, which is perfectly in line with the recent empirical measures of the volatility smoothness obtained in \cite{gatheral2014volatility}. Therefore, our theoretical result shows quite clearly that the rough behavior of the volatility can be explained by the high degree of endogeneity of financial markets together with the order splitting phenomenon. This is to our knowledge the first agent-based explanation for the very rough nature of the volatility.\\

\noindent The paper is organized as follows. We first give our assumptions together with some intuitions about the limiting behavior of our processes in Section
\ref{assumpt}. Section \ref{main} contains our main theorems whose proofs can be found in Section \ref{proofs}. Finally, some technical results are relegated to an appendix.

\section{Assumptions and intuitions for the results}\label{assumpt}

We describe in this section our asymptotic framework together with intuitions about our main results which are given in Section \ref{main}.\\

\noindent We consider a sequence of point processes $(N_t^T)_{t\geq 0}$ indexed by $T$\footnote{Of course by $T$ we implicitly means $T_n$ with $n\in\mathbb N$ tending to infinity.}. For a given $T$, $(N_t^T)$ satisfies $N_0^T=0$ and the process is observed on the time interval $[0,T]$. Our asymptotic setting is that the observation scale $T$ goes to infinity. The intensity process $(\lambda_t^T)$ is defined for $t\geq 0$ by
$$\lambda_t^T=\mu^T+\int_0^t \phi^T(t-s) dN^T_s,$$
where $\mu^T$ is a sequence of positive real numbers and the $\phi^T$ are non-negative measurable functions on $\mathbb{R}^+$ which satisfies $\|\phi^T\|_{1}<+\infty$. For a given $T$, the process $(N_t^T)$ is defined on a probability space $(\Omega^T,\mathcal{F}^T,\mathbb{P}^T)$ equipped with the filtration $(\mathcal{F}_t^T)_{t\in[0,T]}$, where $\mathcal{F}_t^T$ is the $\sigma$-algebra generated by $(N_s^T)_{s\leq t}$. Moreover we assume that for any $0\leq a<b\leq T$ and $A\in \mathcal{F}_a^T$
$$\E[(N_b^T-N^T_{a})\mathrm{1}_A]=\E[\int_a^b\lambda^T_s\mathrm{1}_A ds],$$ which sets $\lambda^T$ as the intensity of $N^T$. In particular, 
if we denote by $(J_n^T)_{n\geq 1}$ the jump times of $(N_t^T)$, the process
$$N^T_{t\wedge J_n^T}-\int_0^{t\wedge J_n^T} \lambda_s^Tds$$ is a martingale and the law of $N^T$ is characterized by $\lambda^T$. From \cite{jacod1975multivariate}, such a construction can be done and the process $N^T$ is called a Hawkes process.\\

\noindent Let us now give more specific assumptions on the functions $\phi^T$. 
\begin{assumption}\label{h1}
For $t\in \mathbb{R}^+$,
$$\phi^T(t)=a_T\phi(t),$$ where $(a_T)_{T\geq 0}$ is a sequence of positive numbers converging to $1$ such that for all $T$, $a_T<1$ and $\phi$ is a non-negative measurable function such that $\|\phi\|_{1}=1$. Furthermore, 
\begin{equation*} \underset{x\rightarrow+\infty}{\emph{lim}}\alpha x^{\alpha}\big(1-F(x)\big)=K,\end{equation*}
for some $\alpha\in(0,1)$ and some positive constant $K$, with
$$F(x)=\int_0^x\phi(s)ds.$$
\end{assumption}
\noindent Recall that in \cite{jaisson2013limit}, it is assumed that \begin{equation}\label{moment}\int_0^{+\infty}t\phi(t)dt<+\infty\end{equation} and this condition leads to a CIR-type limit. Considering Assumption \ref{h1} instead of \eqref{moment} will induce a completely different scaling behavior for the sequence of nearly unstable Hawkes processes. Nevertheless, in this framework, we still have almost surely no explosion\footnote{In fact, for a Hawkes process, the no explosion property can be obtained under weaker conditions, for example $\int_0^t\phi(s)ds<\infty$ for any $t>0$, see \cite{bacry2013some}.}:
\begin{equation*}\label{explosion}\underset{n\rightarrow +\infty}{\text{lim}}J_n^T=+\infty.\end{equation*} 
Remark that we do not work in the stationary setting since our processes start at time $t=0$ and not at $t=-\infty$.\\

\noindent Let $M^T$ denote the martingale process associated to $N^T$, that is, for $t\geq 0$,
$$M^T_t=N^T_t-\int_0^t\lambda^T_sds.$$ We also set $\psi^T$ as the function defined on $\mathbb{R}^+$ by
\begin{equation}\label{defpsi}\psi^T(t)=\sum_{k=1}^\infty (\phi^T)^{*k}(t),\end{equation} where $(\phi^T)^{*1}=\phi^T$ and for $k\geq 2$, $(\phi^T)^{*k}$ denotes the convolution product of $(\phi^T)^{*(k-1)}$ with the function $\phi^T$. Note that $\psi^T(t)$ is well defined since $\|\phi^T\|_1<1$. This function plays an important role in the study of Hawkes processes, see \cite{bacry2012non}. In particular, it is proved in \cite{jaisson2013limit} that the intensity process, rescaled on $[0,1]$, can be rewritten
$$\lambda_{tT}^T=\mu^T+\int_0^{tT} \psi^T(Tt-s) \mu^T ds+\int_0^{tT} \psi^T(Tt-s)dM^T_s.$$
In term of scaling in space, a natural multiplicative factor is $(1-a_T)/\mu^T$. Indeed, in the stationary case, the expectation of $\lambda_t^T$ is $\mu^T/(1-\|\phi^T\|_1)$. Thus, the order of magnitude of the intensity is $\mu^T(1-a_T)^{-1}$. This is why we define
\begin{equation*}
C^T_t=\frac{(1-a_T)}{\mu^T}\lambda_{tT}^T.
\end{equation*}
Then we easily get 
\begin{equation}\label{decC}
C_t^T=(1-a_T)+\int_0^{t} T(1-a_T) \psi^T (Ts) ds+\sqrt{\frac{T(1-a_T)}{\mu^T}}\int_0^{t}\psi^T(T(t-s)) \sqrt{C_s^T}dB^T_s,
\end{equation} with
\begin{equation*}B_t^T=\frac{1}{\sqrt{T}}\int_0^{tT} \frac{dM_s^T}{\sqrt{\lambda^T_s}}.
\end{equation*}

\noindent From \eqref{decC}, we see that the asymptotic behavior of the intensity is closely related to that of $x\mapsto\psi^T(Tx)$. To analyze the limiting behavior of this function, let us remark that for $x\geq 0$, 
\begin{equation}\label{rhopsi}
\rho^T(x)=T\frac{\psi^T(Tx)}{\|\psi^T\|_1}\end{equation}
is the density of the random variable
$$J^T=\frac{1}{T} \sum_{i=1}^{I^T} X_i,$$
where the $(X_i)$ are iid random variables with density $\phi$ and $I^T$ is a geometric random variable with parameter $1-a_T$\footnote{$\forall k>0,~\mathbb{P}[I^T=k]=(1-a_T)(a_T)^{k-1}$.}. The Laplace transform of the random variable $J^T$, denoted by $\widehat{\rho}^T$, satisfies for $z\geq 0$

\begin{align*}
\widehat{\rho}^T(z)&=\E[e^{-zJ^T}]=\sum_{k=1}^\infty (1-a_T) (a_T)^{k-1} \E[e^{-\frac{z}{T}\sum_{i=1}^kX_i}]\\
&=\sum_{k=1}^\infty (1-a_T) (a_T)^{k-1} (\widehat{\phi}(\frac{z}{T}))^{k}=\frac{\widehat{\phi}(\frac{z}{T})}{1-\frac{a_T}{1-a_T}(\widehat{\phi}(\frac{z}{T})-1)},
\end{align*}
where $\widehat{\phi}$ denotes the Laplace of $\phi$. We now need to compute an expansion for $\widehat{\phi}(z)$. Using integration by parts, we get
$$\widehat{\phi}(z)=z\int_0^{+\infty}e^{-zt} F(t)dt=1-z\int_0^{+\infty}e^{-zt} \big(1-F(t)\big)dt.$$
Then using Assumption \ref{h1} together with Karamata Tauberian theorem (see for example Theorem 17.6 in \cite{bingham1989regular}), we get
$$\widehat{\phi}(z)=1-K\frac{\Gamma(1-\alpha)}{\alpha}z^{\alpha}+o(z^\alpha),$$
with $\Gamma$ the gamma function. Set $\delta=K\frac{\Gamma(1-\alpha)}{\alpha}$ and $v_T=\delta^{-1}T^{\alpha}(1-a_T)$. As $T$ goes to infinity, $\widehat{\rho}^T(z)$ is thus equivalent to 
\begin{equation}\label{invlap}\frac{v_T}{v_T+z^{\alpha}}.\end{equation} 
The function whose Laplace transform is equal to this last quantity
is given by 
$$
v_Tx^{\alpha-1}E_{\alpha,\alpha}(-v_Tx^\alpha),
$$
with
$E_{\alpha,\beta}$ the $(\alpha,\beta)$ Mittag-Leffler function, that is $$E_{\alpha,\beta}(z)=\sum_{n=0}^{\infty} \frac{z^n}{\Gamma(\alpha n+\beta)},$$ 
see \cite{haubold2011mittag}. Putting this together with \eqref{decC} and \eqref{rhopsi}, we can expect (for $\alpha>1/2$)
\begin{equation*}
C_t^T\sim v_T\int_0^{t}s^{\alpha-1}E_{\alpha,\alpha}(-v_Ts^\alpha)ds+\gamma_T v_T\int_0^{t}(t-s)^{\alpha-1}E_{\alpha,\alpha}(-v_T(t-s)^\alpha) \sqrt{C_s^T}dB^T_s,\end{equation*}
with $$\gamma_T=\frac{1}{\sqrt{\mu^TT(1-a_T)}}.$$
The process $B^T$ can be shown to converge to a Brownian motion $B$. Thus, denoting by $v_{\infty}$ and $\gamma_{\infty}$ the limits of $v_{T}$ and $\gamma_{T}$, passing (non rigorously) to the limit, we obtain (for $\alpha>1/2$) 
\begin{equation}\label{approx_limit}
C_t^{\infty}\sim v_{\infty}\int_0^{t}s^{\alpha-1}E_{\alpha,\alpha}(-v_{\infty}s^\alpha)ds+\gamma_{\infty} v_{\infty}\int_0^{t}(t-s)^{\alpha-1}E_{\alpha,\alpha}(-v_{\infty}(t-s)^\alpha) \sqrt{C_s^{\infty}}dB_s.\end{equation}
From \eqref{approx_limit}, we see that in order to get a non-deterministic asymptotic behavior for $C_t^T$, we need that both $v_{\infty}$ and $\gamma_{\infty}$ are positive constants or $v_{\infty}$ is equal to zero 
and $\gamma_{\infty}v_{\infty}$ is positive. However, in the last situation, the expectation of $(C_t^{\infty})^2$ would be of order $t^{2\alpha-1}$.
Such cases where the variance is increasing with power law rate are incompatible with the approximate stationarity property we want to keep for our model and its limit. Therefore, only one regime seems to be natural and this leads us to the following assumption.

\begin{assumption}
\label{speed}
There are two positive constants $\lambda$ and $\mu^*$ such that 
$$\underset{T\rightarrow +\infty}{\emph{lim}}T^\alpha(1-a_T)=\lambda\delta.$$
and
$$\underset{T\rightarrow +\infty}{\emph{lim}} T^{1-\alpha}\mu_T=\mu^*\delta^{-1}.$$
\end{assumption}

\noindent In particular, Assumption \ref{speed} implies that $v_T$ converges to $\lambda$ and therefore the sequence of random variables $(J^T)$ converges in law towards the random variable whose density on $\mathbb{R}^+$ is given by 
$$\lambda x^{\alpha-1}E_{\alpha,\alpha}(-\lambda x^\alpha).$$

\noindent Beyond giving the suitable asymptotic regimes for $a_T$ and $\mu_T$, the heuristic derivation leading to \eqref{approx_limit} provides us an expression for the limiting law of the rescaled intensities of our sequence of nearly unstable heavy tailed Hawkes processes. In \eqref{approx_limit}, this law appears under the form of a non-classic stochastic integral equation. Indeed, it is of Volterra-type and is therefore (a priori) neither a diffusion nor a semi-martingale. Furthermore, the main term of the Volterra kernel $x^{\alpha-1}$ exhibits a singularity at point $0$, of the same kind as that of the fractional Brownian motion $(B^H_t)$ when expressed under the form:
\begin{equation}\label{repfbm}
B^H_t=\frac{1}{\Gamma(H+1/2)}\big(\int_0^t(t-s)^{H-1/2}dW_s+ \int_{-\infty}^0(t-s)^{H-1/2}-(-s)^{H-1/2}dW_s\big),\end{equation}
with $(W_t)$ a Brownian motion, see \cite{mandelbrot1968fractional}.\\

\noindent The preceding computations suggest a possible approach to derive the limiting behavior of our sequence of Hawkes processes: studying the intensity of the processes. Indeed the intensities can be rewritten under the form of stochastic integral equations as \eqref{decC}. Consequently, one can try to pass to the limit in the coefficients of the equation to obtain the limiting law, as we (non rigorously) did to get \eqref{approx_limit}. This is exactly the approach used in \cite{jaisson2013limit}. However, in this more intricate heavy tail case, it seems very hard to use. In particular, the sequence $(C_t^T)$ is typically not tight. Thus, instead of considering the intensities, we directly work on the Hawkes processes themselves, more in the spirit of \cite{zhu2013nonlinear}.

\section{Main results}\label{main}

We rigorously state in this section our theorems on the limiting behavior of nearly unstable heavy tailed Hawkes processes. We start with some technical results about the function appearing as the inverse Laplace transform of \eqref{invlap} in Section \ref{assumpt}.

\subsection{The function $f^{\alpha,\lambda}$}

As shown by the derivations in the previous section, the function
$$f^{\alpha,\lambda}(x)=\lambda x^{\alpha-1}E_{\alpha,\alpha}(-\lambda x^\alpha)$$ plays a crucial role in our analysis. We give here some elements about the regularity of this function which will be useful in the sequel. We denote by $I^\alpha f$ and $D^\alpha f$ the fractional integration and derivation operators, which are defined for a suitable measurable function $f$ by
$$I^\alpha f(x)=\frac{1}{\Gamma (\alpha)}\int_0^x \frac{f(t)}{(x-t)^{1-\alpha}}dt$$
and
$$D^\alpha f(x)=\frac{1}{\Gamma (1-\alpha)}\frac{d}{dx}\int_0^x \frac{f(t)}{(x-t)^{\alpha}}dt.$$
The following lemma is a direct consequence of the definition of $f^{\alpha,\lambda}$ and Section 11 in \cite{haubold2011mittag}.

\begin{proposition}
\label{boundfalpha}
The function $f^{\alpha,\lambda}$ is $C^\infty$ on $(0,1]$ and 
$$f^{\alpha,\lambda}(x)\underset{x\to 0^+}{\sim} \frac{\lambda}{\Gamma(\alpha)}x^{\alpha-1},$$
$$(f^{\alpha,\lambda})'(x)\underset{x\to 0^+}{\sim} \frac{\lambda(\alpha-1)}{\Gamma(\alpha)}x^{\alpha-2}.$$
Furthermore, $f^{\alpha,\lambda}(x)x^{1-\alpha}$ has H\"older regularity $\alpha$ on $(0,1]$.\\

\noindent For $\nu<\alpha$, $f^{\alpha,\lambda}$ is $\nu$ fractionally differentiable and
$$D^\nu f^\alpha(x)= \lambda x^{\alpha-1-\nu}E_{\alpha,\alpha-\nu}(-\lambda x^\alpha) .$$
Therefore,
$$D^\nu f^\alpha(x)\underset{x\to 0^+}{\sim} \frac{\lambda}{\Gamma(\alpha-\nu)}\frac{1}{x^{1-\alpha+\nu}}$$
and
$$(D^\nu f^\alpha)'(x)\underset{x\to 0^+}{\sim} \frac{\lambda(\alpha-1-\nu)}{\Gamma(\alpha-\nu)}\frac{1}{x^{2-\alpha+\nu}}.$$
\noindent For $\nu'>0$, $f^\alpha$ is $\nu'$ fractionally integrable and
$$I^{\nu'} f^\alpha(x)=\lambda\frac{1}{x^{1-\alpha-\nu'}}E_{\alpha,\alpha+\nu'}(-\lambda x^\alpha).$$
Therefore,
$$I^{\nu'} f^\alpha(x)\underset{x\to 0^+}{\sim} \frac{\lambda}{\Gamma(\alpha+\nu')}\frac{1}{x^{1-\alpha-\nu'}}$$
and for $\alpha+\nu'\neq 1$, $$(I^{\nu'} f^\alpha)'(x)\underset{x\to 0^+}{\sim} \frac{\lambda(\alpha-1+\nu')}{\Gamma(\alpha+\nu')}\frac{1}{x^{2-\alpha-\nu'}}.$$
\end{proposition}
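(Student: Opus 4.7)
The proposition compiles standard properties of the Mittag-Leffler function; my plan is to reduce everything to the entire power series $E_{\alpha,\alpha-\nu}(z)=\sum_{n\geq 0} z^n/\Gamma(\alpha n+\alpha-\nu)$ and then compute termwise.

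First, I would write
$$f^{\alpha,\lambda}(x)=\sum_{n=0}^{\infty}\frac{(-1)^{n}\lambda^{n+1}x^{\alpha(n+1)-1}}{\Gamma(\alpha(n+1))}.$$
Since $E_{\alpha,\alpha}$ is an entire function of $z$ and $x\mapsto x^{\alpha}$ is $C^{\infty}$ on $(0,\infty)$, the product $\lambda x^{\alpha-1}E_{\alpha,\alpha}(-\lambda x^{\alpha})$ is $C^{\infty}$ on $(0,1]$. The asymptotic equivalents at $0^{+}$ for $f^{\alpha,\lambda}$ and $(f^{\alpha,\lambda})'$ follow by isolating the $n=0$ term of the series: one gets $f^{\alpha,\lambda}(x)=\lambda x^{\alpha-1}/\Gamma(\alpha)+O(x^{2\alpha-1})$, and differentiating the series yields $(f^{\alpha,\lambda})'(x)=\lambda(\alpha-1)x^{\alpha-2}/\Gamma(\alpha)+O(x^{2\alpha-2})$, which is dominated by the leading term since $\alpha>0$.

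For the H\"older regularity of $x\mapsto x^{1-\alpha}f^{\alpha,\lambda}(x)=\lambda E_{\alpha,\alpha}(-\lambda x^{\alpha})$, I would note that $E_{\alpha,\alpha}$ is analytic, hence Lipschitz on any bounded subset of $\mathbb{R}$, while $x\mapsto x^{\alpha}$ is H\"older of exponent $\alpha$ on $[0,1]$; composition then yields the claimed H\"older-$\alpha$ regularity.

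For the fractional derivative and integral, I would apply $D^{\nu}$ (respectively $I^{\nu'}$) term by term to the series, using the classical identities
$$D^{\nu}x^{\beta}=\frac{\Gamma(\beta+1)}{\Gamma(\beta+1-\nu)}x^{\beta-\nu},\qquad I^{\nu'}x^{\beta}=\frac{\Gamma(\beta+1)}{\Gamma(\beta+1+\nu')}x^{\beta+\nu'},$$
valid for $\beta>-1$ (here $\beta=\alpha(n+1)-1>-1$ since $\alpha>0$, and for the derivative one needs $\nu<\alpha$ so that the gamma factor in the denominator is finite for every $n$). The $\Gamma(\alpha(n+1))$ factors cancel, and after factoring out $\lambda x^{\alpha-1-\nu}$ (resp.\ $\lambda x^{\alpha-1+\nu'}$) I obtain exactly $\lambda x^{\alpha-1-\nu}E_{\alpha,\alpha-\nu}(-\lambda x^{\alpha})$ (resp.\ $\lambda x^{\alpha-1+\nu'}E_{\alpha,\alpha+\nu'}(-\lambda x^{\alpha})$). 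Termwise manipulation is justified by the locally uniform convergence of the series and its derivatives, since $E_{\alpha,\beta}$ is entire. The asymptotic equivalents for $D^{\nu}f^{\alpha}$, $(D^{\nu}f^{\alpha})'$, $I^{\nu'}f^{\alpha}$, $(I^{\nu'}f^{\alpha})'$ at $0^{+}$ then follow by exactly the same leading-term argument as for $f^{\alpha,\lambda}$ itself.

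The only genuine subtlety is the H\"older regularity statement and the termwise differentiation of the fractional operators; everything else is a direct unrolling of the Mittag-Leffler series, which is why the authors cite Section 11 of \cite{haubold2011mittag} where these identities are collected.
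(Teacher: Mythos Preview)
Your proposal is correct and follows essentially the same approach as the paper, which simply states that the proposition is a direct consequence of the definition of $f^{\alpha,\lambda}$ and the standard Mittag-Leffler identities collected in Section~11 of \cite{haubold2011mittag}. Your termwise series argument is exactly the computation underlying that citation, so there is nothing to add.
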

\noindent Proposition \ref{boundfalpha} will be a key tool in the proofs of the main results.

\subsection{The limiting behavior of nearly unstable heavy tailed Hawkes processes}

Let us first give some notations. We consider for $t\in[0,1]$ the renormalized Hawkes process
$$X^T_t=\frac{1-a_T}{T^{\alpha}\mu^*\delta^{-1}}N^T_{Tt}$$
and its associated integrated intensity
$$\Lambda^T_t=\frac{1-a_T}{T^{\alpha}\mu^*\delta^{-1}}\int_{0}^{tT}\lambda_s^Tds.$$
As explained in Section \ref{assumpt}, the space renormalization is chosen so that the processes have an expectation of order one.
We also introduce the martingale defined on $[0,1]$ by
$$Z_t^T=\sqrt{\frac{T^{\alpha}\mu^*\delta^{-1}}{1-a_T}}(X^T_t-\Lambda^T_t).$$
We are now ready to give our results about the convergence in distribution of $(Z^T,X^T)$ for the Skorohod topology.

\begin{proposition}\label{prop_main}
Under Assumptions \ref{h1} and \ref{speed}, the sequence $(Z^T,X^T)$ is tight. Furthermore, if $(Z,X)$ is a limit point of $(Z^T,X^T)$, then $Z$ is a continuous martingale and $[Z,Z]=X$.
\end{proposition}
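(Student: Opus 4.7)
The plan rests on the observation that the quadratic variation identity holds pathwise at the prelimit level, namely $[Z^T,Z^T]=X^T$, so that the work consists in establishing tightness and in transferring this identity to any limit point. Writing $M^T_t = N^T_t - \int_0^t \lambda^T_s\,ds$ for the compensated Hawkes martingale, one has $Z^T_t = \sqrt{(1-a_T)/(T^\alpha \mu^*\delta^{-1})}\,M^T_{Tt}$; hence $Z^T$ is a martingale, its jumps are uniformly bounded by $\sqrt{(1-a_T)/(T^\alpha \mu^*\delta^{-1})}$, which vanishes, and $[Z^T,Z^T]_t = ((1-a_T)/(T^\alpha \mu^*\delta^{-1}))\,N^T_{Tt} = X^T_t$ by the jump quadratic variation of a compensated pure-jump martingale.

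The main preliminary ingredient is a first-moment bound. Using the resolvent representation $\E[\lambda^T_s]=\mu^T(1+\int_0^s\psi^T(u)du)\leq \mu^T/(1-a_T)$ together with Assumption~\ref{speed}, one obtains $\sup_{T,\,t\in[0,1]}\E[X^T_t]<\infty$. Via $\E[(Z^T_t)^2]=\E[X^T_t]$ this also gives a uniform $L^2$ bound on $Z^T$, ensuring in particular that the marginals of $(X^T, Z^T)$ are tight.

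Tightness of $X^T$ is the crux. Since $X^T$ is nondecreasing with vanishing jumps, I would apply Aldous' criterion, which reduces to bounding $\E[X^T_{(\tau_T+h)\wedge 1}-X^T_{\tau_T}]$ for bounded stopping times $\tau_T$ and small $h$. I would decompose $\lambda^T_{T\tau_T+s} = \widetilde\mu^T_s + \int_{T\tau_T}^{T\tau_T+s}\phi^T(T\tau_T+s-r)\,dN^T_r$ with $\widetilde\mu^T_s = \mu^T + \int_0^{T\tau_T}\phi^T(T\tau_T+s-r)\,dN^T_r$ being $\mathcal F^T_{T\tau_T}$-measurable, iterate the renewal relation, and use the identity $\rho^T(x) = T\psi^T(Tx)/\|\psi^T\|_1$ together with $\rho^T\to f^{\alpha,\lambda}$. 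The key cancellation is between the prefactor $1-a_T$ in the definition of $X^T$ and the divergence $\|\psi^T\|_1\sim (1-a_T)^{-1}$: after this cancellation the bound takes the form $C\int_0^h f^{\alpha,\lambda}(v)\,dv+o_T(1)$, which by Proposition~\ref{boundfalpha} is of order $h^\alpha$, delivering Aldous. Tightness of $Z^T$ then follows from the martingale tightness criterion of Rebolledo (e.g.\ Theorem VI.4.13 in Jacod--Shiryaev): $[Z^T,Z^T]=X^T$ is tight, the jumps vanish, and the conditional second moments of increments reduce to $\E[X^T_{\tau_T+h}-X^T_{\tau_T}\mid\mathcal F^T_{T\tau_T}]$, already controlled. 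Joint tightness of $(Z^T,X^T)$ follows since both have only small jumps occurring at the same times.

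For the identification of a limit point $(Z,X)$, continuity of $Z$ is immediate from the uniform vanishing of jumps, and the $L^2$ bound provides the uniform integrability needed to pass the martingale property through the limit. To conclude $[Z,Z]=X$ I would invoke Theorem IX.1.17 of Jacod--Shiryaev: for a sequence of locally square-integrable martingales with vanishing jumps that converges in law, jointly with its quadratic variations, to a pair $(Z,\widetilde X)$ with $\widetilde X$ continuous, the limit $Z$ is a continuous martingale with $[Z,Z]=\widetilde X$; applied to $Z^T$ with $[Z^T,Z^T]=X^T$ this yields $[Z,Z]=X$. The hardest step will be the Aldous estimate for $X^T$: the crude bound $\int_0^{Th}\psi^T(u)\,du\leq\|\psi^T\|_1\sim T^\alpha/(\lambda\delta)$ diverges and has to be replaced by the finer asymptotics stemming from $\rho^T\to f^{\alpha,\lambda}$, where the regularity of $f^{\alpha,\lambda}$ near the origin provided by Proposition~\ref{boundfalpha} is what makes the estimate effective.
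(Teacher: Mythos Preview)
Your overall architecture is sound and aligns with the paper: you exploit the prelimit identity $[Z^T,Z^T]=X^T$, prove tightness, and then transfer the bracket identity to the limit via the Jacod--Shiryaev machinery. The first-moment bound, the vanishing jumps, the martingale tightness criterion (Theorem~VI.4.13) for $Z^T$, and the limit identification are all essentially as in the paper (the paper cites Proposition~VI-6.26 and Corollary~IX.1.19 rather than Theorem~IX.1.17, and uses the predictable bracket $\langle Z^T\rangle=\Lambda^T$ rather than $[Z^T,Z^T]=X^T$ for the tightness of $Z^T$, but these choices are interchangeable here).

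Where you diverge is on the tightness of $X^T$, which you flag as ``the crux'' and ``the hardest step''. In fact the paper dispatches it in one line: $X^T$ is nondecreasing with $X^T_0=0$ and $\sup_T\E[X^T_1]<\infty$, and for a sequence of nondecreasing c\`adl\`ag processes, tightness of the terminal value already implies tightness in the Skorohod space; $\mathbb C$-tightness then follows from the vanishing jump size via Proposition~VI-3.26. There is no Aldous criterion, no stopping times, no post-$\tau_T$ decomposition of the intensity, and no need for the convergence $\rho^T\to f^{\alpha,\lambda}$ at this stage --- that asymptotic enters only later, in the derivation of \eqref{eint}. Your Aldous route may be feasible, but the sketch is not obviously clean: obtaining a bound on $\E[X^T_{(\tau_T+h)\wedge 1}-X^T_{\tau_T}]$ that is uniform over \emph{all} bounded stopping times $\tau_T$ requires controlling the conditional baseline $\widetilde\mu^T$ uniformly, and the claimed $h^\alpha$ rate is stronger than what you actually need (the crude bound $\E[\lambda^T_s]\le\mu^T/(1-a_T)$ already gives order $h$ at deterministic times). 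The monotonicity argument sidesteps all of this.
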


\noindent Now let $(Z,X)$ be a couple of processes defined on some probability space $(\Omega,\mathcal{A},\mathbb{P})$ with law being one of the possible limit points of the sequence of distributions associated to the sequence $(Z^T,X^T)$.
From Proposition \ref{prop_main}, we are able to obtain the following theorem which is one of our main results.

\begin{theorem}\label{theo_main}
There exists a Brownian motion $B$ on $(\Omega,\mathcal{A},\mathbb{P})$ (up to extension of the space) such that
for $t\in[0,1]$, $Z_t=B_{X_t}$ and for any $\varepsilon>0$, $X$ is continuous with H\"older regularity $(1\wedge 2\alpha)-\varepsilon$ on $[0,1]$ and satisfies 
\begin{equation}
\label{eint}
X_t=\int_0^tsf^{\alpha,\lambda}(t-s)ds+\frac{1}{\sqrt{\mu^*\lambda}}\int_0^tf^{\alpha,\lambda}(t-s)B_{X_s}ds.
\end{equation}
\end{theorem}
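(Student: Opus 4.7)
I start from Proposition \ref{prop_main}: along a subsequence, $(Z^T,X^T)$ converges in law to $(Z,X)$ with $Z$ a continuous martingale and $[Z,Z]=X$. Since each $X^T$ is non-decreasing, so is $X$, and Dambis--Dubins--Schwarz applied to $Z$ (after, if needed, enlarging the probability space by an independent Brownian motion to cover times beyond $X_1$) produces a Brownian motion $B$ with $Z_t=B_{X_t}$.

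The heart of the proof is to derive the integral equation for $X$. I would work with $\Lambda^T_t:=\tfrac{1-a_T}{T^\alpha\mu^\ast\delta^{-1}}\int_0^{tT}\lambda^T_s\,ds$, which differs from $X^T_t$ by $\sqrt{\tfrac{1-a_T}{T^\alpha\mu^\ast\delta^{-1}}}\,Z^T_t$, a quantity that vanishes in probability by tightness of $Z^T$. Using the resolvent identity $\lambda^T_t=\mu^T(1+\Psi^T(t))+\int_0^t\psi^T(t-s)\,dM^T_s$ with $\Psi^T(t)=\int_0^t\psi^T(u)\,du$, integration on $[0,tT]$ and the substitution $s=vT$ give
\begin{equation*}
\Lambda^T_t=\mathrm{o}(1)+\frac{\mu^T(1-a_T)T^2}{T^\alpha\mu^\ast\delta^{-1}}\int_0^t(t-v)\psi^T(Tv)\,dv+\sqrt{\frac{1-a_T}{T^\alpha\mu^\ast\delta^{-1}}}\int_0^t\Psi^T(T(t-v))\,dZ^T_v.
\end{equation*}
The decisive move is integration by parts in the Volterra stochastic integral: since $\Psi^T(0)=0$, $Z^T_0=0$, and $v\mapsto\Psi^T(T(t-v))$ is a continuous deterministic function of bounded variation,
$$
\int_0^t\Psi^T(T(t-v))\,dZ^T_v=T\int_0^t\psi^T(T(t-v))\,Z^T_v\,dv,
$$
converting the Volterra stochastic integral into a Lebesgue integral against a tight continuous process, for which the passage to the limit is within reach of weak convergence and dominated convergence arguments.

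Next I pass to the limit in the deterministic kernels. The Tauberian analysis of Section \ref{assumpt} combined with Assumption \ref{speed} yields
$$
\frac{\mu^T(1-a_T)T^2}{T^\alpha\mu^\ast\delta^{-1}}\psi^T(Tv)\to f^{\alpha,\lambda}(v),\qquad\sqrt{\tfrac{1-a_T}{T^\alpha\mu^\ast\delta^{-1}}}\,T\,\psi^T(T(t-v))\to\tfrac{1}{\sqrt{\lambda\mu^\ast}}f^{\alpha,\lambda}(t-v),
$$
pointwise and (I claim) in $L^1_{\mathrm{loc}}$. Combined with Skorohod convergence of $(Z^T,X^T)$ to $(Z,X)$ and the local integrability $f^{\alpha,\lambda}(v)\sim\lambda v^{\alpha-1}/\Gamma(\alpha)$ from Proposition \ref{boundfalpha}, dominated convergence and the continuous-mapping theorem give
$$
X_t=\int_0^t(t-v)f^{\alpha,\lambda}(v)\,dv+\tfrac{1}{\sqrt{\lambda\mu^\ast}}\int_0^t f^{\alpha,\lambda}(t-v)\,Z_v\,dv.
$$
Substituting $Z_v=B_{X_v}$ and changing variables $s=t-v$ in the first integral produces \eqref{eint}. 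The main technical obstacle lies precisely here: proving uniform-in-$T$ $L^1_{\mathrm{loc}}$ bounds for the singular rescaled resolvent $T\psi^T(Tv)$ near $v=0$, which is what makes dominated convergence legitimate. This demands fine control on the Mittag-Leffler asymptotics of $\psi^T$ beyond the mere Laplace-level approximation $\widehat{\rho}^T(z)\sim\lambda/(\lambda+z^\alpha)$.

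The H\"older regularity follows by bootstrap on \eqref{eint}. The drift $\int_0^t s f^{\alpha,\lambda}(t-s)\,ds=I^2 f^{\alpha,\lambda}(t)$ is Lipschitz with derivative the bounded function $F^{\alpha,\lambda}$. If $X$ is known to be $\beta$-H\"older on $[0,1]$, then $v\mapsto B_{X_v}$ is $(\beta/2-\eta)$-H\"older for every $\eta>0$, and by Proposition \ref{boundfalpha} the convolution with $f^{\alpha,\lambda}$ (essentially fractional integration of order $\alpha$) raises the exponent by $\alpha$; the self-consistent regularity is therefore $(1\wedge 2\alpha)-\varepsilon$ for arbitrary $\varepsilon>0$. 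A rigorous version uses Kolmogorov's continuity criterion via moment estimates $\mathbb{E}[|X_t-X_s|^{2p}]\leq C_p|t-s|^{p((1\wedge 2\alpha)-\varepsilon)}$ obtained by iterating \eqref{eint} and applying a Gr\"onwall-type bound.
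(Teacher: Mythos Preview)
Your overall architecture matches the paper's: Skorokhod representation, rewrite $\Lambda^T$ via the resolvent $\psi^T$, pass to the limit term by term, then bootstrap the H\"older exponent. The divergence---and the gap you yourself flag---is in the stochastic term.

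You already have this term in the form $\int_0^t\Psi^T(T(t-v))\,dZ^T_v$, with the \emph{bounded} integrated kernel $\Psi^T$, and you then integrate by parts to $T\int_0^t\psi^T(T(t-v))\,Z^T_v\,dv$ in order to pass to the limit against the singular density $T\psi^T(T\cdot)=\tfrac{a_T}{1-a_T}\rho^T(\cdot)$. That step requires $\rho^T\to f^{\alpha,\lambda}$ in $L^1_{\mathrm{loc}}$ together with a uniform integrable majorant near $0$. But the Tauberian argument of Section~\ref{assumpt} only yields \emph{weak} convergence of the measures $\rho^T(v)\,dv$, i.e.\ uniform convergence of the distribution functions $F^T\to F^{\alpha,\lambda}$ (this is exactly Lemma~\ref{ucp}). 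Neither pointwise nor $L^1$ convergence of the densities follows, and the ``fine control on the Mittag-Leffler asymptotics of $\psi^T$'' you invoke is never established in the paper and is not obviously available.

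The paper avoids this obstacle by running the integration by parts in the \emph{opposite} direction. Keeping the term as $\int_0^t F^T(t-s)\,dZ^T_s$, it first replaces $F^T$ by $F^{\alpha,\lambda}$ using the martingale $L^2$ bound
\[
\E\Big[\Big(\int_0^t\big(F^{\alpha,\lambda}(t-s)-F^T(t-s)\big)\,dZ^T_s\Big)^2\Big]\le c\int_0^t\big(F^{\alpha,\lambda}(t-s)-F^T(t-s)\big)^2\,ds,
\]
which vanishes by the uniform convergence $F^T\to F^{\alpha,\lambda}$ that weak convergence does provide. Only \emph{after} the kernel has been frozen at $F^{\alpha,\lambda}$ does one integrate by parts to $\int_0^t f^{\alpha,\lambda}(t-s)Z^T_s\,ds$; now the kernel is fixed and integrable, and $Z^T\to Z$ uniformly handles the rest. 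In short: compare kernels in the stochastic-integral form (where the isometry reduces to $F^T\to F^{\alpha,\lambda}$), and compare processes in the Lebesgue-integral form (where uniform convergence of $Z^T$ suffices). Your ``decisive move'' undoes precisely the representation that makes the limit tractable.

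Your bootstrap for the H\"older exponent is the same mechanism as the paper's Lemma~\ref{rec}; the paper makes it rigorous via the fractional-calculus estimates of Proposition~\ref{boundfalpha} and Proposition~\ref{power} rather than Kolmogorov's criterion, but the idea is the same.
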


\noindent Hence the limiting process in Theorem \ref{theo_main} has a quite original form, which can actually be interpreted more easily by looking at its derivative (when it exists).
 
\subsection{The limiting volatility process}

As explained in the introduction, when it exists, the derivative of the limiting process $X$ in Theorem \ref{theo_main} can be interpreted as a volatility function. Actually, if the tail of the function $\phi$ is not too heavy, $X$ is indeed differentiable. Let us write
$$F^{\alpha,\lambda}(t)=\int_0^t f^{\alpha,\lambda}(s)ds.$$
The following result holds.

\begin{theorem}\label{theo_deriv}
Let $(X_t)$ be a process satisfying \eqref{eint} for $t\in[0,1]$ and assume $\alpha>1/2$. Then $X$ is differentiable on $[0,1]$ and its derivative $Y$ satisfies 
\begin{equation}
Y_t=F^{\alpha,\lambda}(t)+\frac{1}{\sqrt{\mu^*\lambda}} \int_0^t f^{\alpha,\lambda}(t-s)\sqrt{Y_s}dB^1_s,\label{equavol}
\end{equation}
with $B^1$ a Brownian motion.
Furthermore, for any $\varepsilon>0$, $Y$ has H\"older regularity  $\alpha-1/2-\varepsilon$.
\end{theorem}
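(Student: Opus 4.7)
The plan is to recast both terms of \eqref{eint} as integrals of the form $\int_0^t(\cdot)\,dv$, read off $Y=X'$, then rewrite the stochastic part in the shape \eqref{equavol} via a Dambis--Dubins--Schwarz time change. The hypothesis $\alpha>1/2$ enters through the $L^2$--integrability of the singular kernel $f^{\alpha,\lambda}$ against Lebesgue measure, which is exactly what makes the resulting Volterra stochastic integral well-defined.

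For the deterministic term, the substitution $u=t-s$ followed by Fubini gives
$$\int_0^t s\,f^{\alpha,\lambda}(t-s)\,ds=\int_0^t(t-u)f^{\alpha,\lambda}(u)\,du=\int_0^t F^{\alpha,\lambda}(v)\,dv.$$
For the stochastic term, $s\mapsto B_{X_s}$ is a continuous martingale vanishing at $0$, of quadratic variation $X$, so one may write $B_{X_s}=\int_0^s dB_{X_r}$ and use a deterministic/stochastic Fubini to obtain
$$\int_0^t f^{\alpha,\lambda}(t-s)B_{X_s}\,ds=\int_0^t F^{\alpha,\lambda}(t-r)\,dB_{X_r}=\int_0^t\Big(\int_0^v f^{\alpha,\lambda}(v-r)\,dB_{X_r}\Big)dv,$$
the last equality relying on $F^{\alpha,\lambda}(t-r)=\int_r^t f^{\alpha,\lambda}(v-r)\,dv$. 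The inner stochastic integral is well-defined precisely because $\alpha>1/2$ makes $f^{\alpha,\lambda}(v-r)^2\sim(v-r)^{2\alpha-2}$ integrable at $r=v$ (the H\"older regularity of $X$ provided by Theorem \ref{theo_main} ensures absolute convergence of integrals against $dX$). Combining the two computations realises $X$ as the Lebesgue integral of
$$Y_v=F^{\alpha,\lambda}(v)+\frac{1}{\sqrt{\mu^*\lambda}}\int_0^v f^{\alpha,\lambda}(v-r)\,dB_{X_r},$$
hence $X$ is differentiable with $X'=Y$.

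To recast $Y$ in the form \eqref{equavol}, I would invoke Dambis--Dubins--Schwarz: since $s\mapsto B_{X_s}$ is a continuous martingale with quadratic variation $X_s=\int_0^s Y_r\,dr$, there exists a Brownian motion $B^1$ (on an enlarged space, using an auxiliary independent Brownian motion to handle the set $\{Y=0\}$) such that $dB_{X_r}=\sqrt{Y_r}\,dB^1_r$. Non-negativity of $Y$, which is needed to make sense of $\sqrt{Y_r}$, follows from the fact that $X$ is non-decreasing as a Skorohod limit of rescaled counting processes. Substituting yields \eqref{equavol}.

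It remains to derive the H\"older regularity. The deterministic part $F^{\alpha,\lambda}$ is $C^\infty$ on $(0,1]$ and H\"older of order $\alpha>1/2$ near $0$ by Proposition \ref{boundfalpha}, so only the stochastic piece $I(t):=\int_0^t f^{\alpha,\lambda}(t-s)\sqrt{Y_s}\,dB^1_s$ is delicate. For $0\le s<t\le 1$ and $p$ large, Burkholder--Davis--Gundy combined with Jensen applied to the positive kernel gives
$$\E|I(t)-I(s)|^{2p}\leq C_p\Big(\int_0^1\bigl[f^{\alpha,\lambda}(t-r)\mathrm{1}_{r<t}-f^{\alpha,\lambda}(s-r)\mathrm{1}_{r<s}\bigr]^2\,dr\Big)^p\,\sup_{r\le 1}\E[Y_r^p].$$
A direct computation using $f^{\alpha,\lambda}(u)\sim\lambda u^{\alpha-1}/\Gamma(\alpha)$ and the bound on $(f^{\alpha,\lambda})'$ supplied by Proposition \ref{boundfalpha} shows that the bracketed $L^2$ norm is of order $(t-s)^{\alpha-1/2}$, so Kolmogorov's continuity criterion delivers H\"older regularity $\alpha-1/2-\varepsilon$ as soon as the moment bound $\sup_{r\le 1}\E Y_r^p<\infty$ is in hand. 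Securing this a priori $L^p$ control on $Y$ is the main technical obstacle; I would obtain it by raising \eqref{equavol} to the $p$-th power, applying BDG and the same Volterra kernel estimate, and closing the loop via a singular Gr\"onwall inequality.
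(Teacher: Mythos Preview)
Your approach is correct and genuinely different from the paper's. The paper avoids the singularity of $f^{\alpha,\lambda}$ by passing through fractional calculus: it writes $\int_0^t f^{\alpha,\lambda}(t-s)Z_s\,ds=\int_0^t D^\nu f^{\alpha,\lambda}(t-s)\,I^\nu Z_s\,ds$ for a parameter $\nu\in(1/2,\alpha)$, then uses $I^\nu Z_s=\int_0^s D^{1-\nu}Z_u\,du$ and ordinary Fubini to exhibit $X$ as $\int_0^t Y_s\,ds$ with $Y_s=F^{\alpha,\lambda}(s)+\frac{1}{\sqrt{\mu^*\lambda}}\int_0^s D^\nu f^{\alpha,\lambda}(s-u)\,D^{1-\nu}Z_u\,du$. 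The H\"older regularity of $Y$ then follows \emph{pathwise} from Proposition~\ref{power}, because $D^\nu f^{\alpha,\lambda}$ behaves like $x^{\alpha-\nu-1}$ and $D^{1-\nu}Z$ is continuous; no moment bounds on $Y$ are needed. Only afterwards is stochastic Fubini used, to turn the fractional expression for $Y$ back into the Volterra form \eqref{equavol}. By contrast, you apply stochastic Fubini to the singular kernel $f^{\alpha,\lambda}$ directly, exploiting $\alpha>1/2$ so that $f^{\alpha,\lambda}\in L^2$; this shortcuts the fractional machinery entirely and yields \eqref{equavol} in one stroke. The price is that your H\"older argument is probabilistic (BDG + Kolmogorov) and needs the a priori bound $\sup_{r}\E[Y_r^p]<\infty$; this is obtainable by the bootstrap you sketch (note $\E[Y_t]=F^{\alpha,\lambda}(t)\le 1$ starts the induction), but it is an extra step the paper's pathwise route sidesteps. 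Both methods rely on the same DDS/martingale-representation input to produce $B^1$.
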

\subsection{Discussion}

We now comment the results given in Theorem \ref{theo_main} and Theorem \ref{theo_deriv}.\\

\noindent $\bullet~$ The singularity at zero of the function $f^{\alpha,\lambda}$ appearing in our two theorems is of order $x^{\alpha-1}$. Making an analogy with the Volterra representation of the fractional Brownian motion \eqref{repfbm}, this corresponds to a Hurst parameter $H$ equal to $\alpha-1/2$. Thus, in the case $\alpha>1/2$ where our volatility process is well defined, because of the square root term in front of the Brownian motion, we can somehow interpret \eqref{equavol} as a fractional CIR process with Hurst parameter equal to $\alpha-1/2$.  This dynamics leads to a very rough process, with H\"older regularity close to zero when $\alpha$ is close to $1/2$. As mentioned in the introduction, this is perfectly consistent with recent empirical measures of the volatility smoothness on financial data, see \cite{gatheral2014volatility}.\\

\noindent $\bullet~$ A practical consequence of the preceding point is the following: When observing on a time interval of order $$\frac{1}{(1-\|\phi\|_1)^{1/\alpha}}$$
a Hawkes process with kernel $\phi$ with $L^1$ norm close to one and power law tail with index $1+\alpha$, then after rescaling, a fractional-like behavior is obtained.\\  

\noindent $\bullet~$ Theorem \ref{theo_deriv} relates the smoothness of the volatility process to the tail parameter $\alpha$. This is particularly interesting for financial applications. Indeed, the parameter $\alpha$ is usually considered very hard to measure. Our theorem provides an approach where it can be obtained relying on the smoothness of the volatility, which is much easier to estimate, see \cite{gatheral2014volatility}.\\

\noindent $\bullet~$ The irregular volatility appearing at the limit arises because our Hawkes processes are nearly unstable with heavy tailed kernels. As explained in the introduction, in financial terms, it means that the rough behavior of the volatility can be explained by the high degree of endogeneity of modern markets combined with the persistent nature of the order flows.\\

\noindent $\bullet~$ A natural question is that of the uniqueness of the solution of Equation \eqref{equavol}. Because of the singular Volterra kernel and of the square root term, it is probably quite difficult to answer. Actually, in the very recent paper \cite{mytnik2015uniqueness}, using SPDE techniques, the authors show weak uniqueness for the solutions of an equation which is quite similar to \eqref{equavol}. However, they use the specific form of their equation and their approach cannot be adapted to our case in an obvious way.\\

\noindent $\bullet~$ Note that Hawkes processes with $L^1$ norm exactly equal to one have been introduced in \cite{bremaud2001hawkes}. In this work, the authors show that in order to get a stationary intensity, the parameter $\mu$ must be equal to zero and the regression kernel has to be heavy tailed. Several additional results for the non-stationary heavy tailed case ($\|\phi\|_1=1$ and $\mu>0$) can be found in \cite{zhu2013nonlinear}.\\

\noindent $\bullet~$ Compared to the approach in \cite{jaisson2013limit}, it is important to remark that our volatility process is simply the derivative of the limit of the sequence of nearly unstable heavy tailed Hawkes processes. Contrary to what is done in \cite{jaisson2013limit}, we do not provide any result about the convergence of the sequence of intensities of the Hawkes processes. In particular, the sequence of intensities is not shown to converge towards the volatility\footnote{Actually it can be shown that for some reasonable functions $\phi$, the sequence of intensities does not converge, at least in the Skorohod topology.}. Note also that our assumptions are slightly weaker than those in \cite{jaisson2013limit}. In particular, we do not require the function $\phi$ to be bounded. Again, this is relevant for financial applications where $\phi(t)$ becomes typically very large as $t$ tends to zero, see \cite{bacry2014estimation}.

\section{Proofs}\label{proofs}

We give in this section the proofs of Proposition \ref{prop_main}, Theorem \ref{theo_main} and Theorem \ref{theo_deriv}. In the sequel, $c$ denotes a positive constant which may vary from line to line (and even within the same line if no ambiguity).

\subsection{Proof of Proposition \ref{prop_main}}
We show here the tightness of $(Z^T,X^T)$. We start with the following lemma.

\begin{lemma}\label{ctight}
The sequences $X^T$ and $\Lambda^T$ are $\mathbb{C}$-tight.
\end{lemma}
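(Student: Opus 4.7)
The plan is to control $\Lambda^T$ first via a simple moment estimate on the intensity, then transfer tightness to $X^T$ by showing that $X^T-\Lambda^T$ vanishes uniformly in probability.

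First I would compute $\mathbb{E}[\lambda_s^T]$. Taking expectations in the Hawkes equation and resolving with $\psi^T$ gives
$$\mathbb{E}[\lambda^T_s]=\mu^T+\mu^T\int_0^s\psi^T(u)du\leq\mu^T(1+\|\psi^T\|_1)=\frac{\mu^T}{1-a_T}.$$
Plugging this into the definition of $\Lambda^T_t$ and using Assumption \ref{speed} (which makes $(1-a_T)\mu^T T^{1-2\alpha}$ converge to $\lambda/\mu^*$ and hence the prefactor $\delta\mu^T/(T^\alpha\mu^*)$ tend to $1/T$) yields $\mathbb{E}[\Lambda^T_t]\leq t$ and, by the same calculation applied on $[sT,tT]$,
$$\mathbb{E}[\Lambda^T_t-\Lambda^T_s]\leq t-s$$
uniformly in $T$. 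Since $\Lambda^T$ is increasing and continuous, this Lipschitz-in-expectation bound, combined with Markov's inequality and monotonicity, immediately controls the modulus of continuity $w(\Lambda^T,\eta):=\sup_{|t-s|\leq\eta}|\Lambda^T_t-\Lambda^T_s|$ in probability, yielding the $\mathbb{C}$-tightness of $(\Lambda^T)$.

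Next I would handle $X^T$ by showing that the martingale part
$$X^T_t-\Lambda^T_t=\frac{(1-a_T)\delta}{T^\alpha\mu^*}M^T_{Tt}$$
converges to zero uniformly on $[0,1]$ in probability. Its predictable bracket equals
$$\langle X^T-\Lambda^T\rangle_t=\Bigl(\frac{(1-a_T)\delta}{T^\alpha\mu^*}\Bigr)^2\int_0^{tT}\lambda^T_sds=\frac{(1-a_T)\delta}{T^\alpha\mu^*}\,\Lambda^T_t,$$
whose expectation at $t=1$ is bounded by $(1-a_T)\delta/(T^\alpha\mu^*)\to 0$ under Assumption \ref{speed}. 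Doob's $L^2$ inequality then gives $\mathbb{E}[\sup_{t\leq 1}(X^T_t-\Lambda^T_t)^2]\to 0$, so $X^T-\Lambda^T\to 0$ uniformly in probability.

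Finally, the $\mathbb{C}$-tightness of $(X^T)$ follows from the decomposition $X^T=\Lambda^T+(X^T-\Lambda^T)$: tightness is inherited from $(\Lambda^T)$, and all limit points are continuous because the uniform convergence of $X^T-\Lambda^T$ to $0$ forces the jumps of any limit point to coincide with those of the (continuous) limits of $\Lambda^T$, while the individual jumps of $X^T$ itself have deterministic size $(1-a_T)\delta/(T^\alpha\mu^*)\to 0$. The only mildly delicate point is the modulus-of-continuity argument for $\Lambda^T$, but since $\Lambda^T$ is monotone, the bound on expected increments translates into a tightness criterion with no further work.
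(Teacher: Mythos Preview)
Your moment bound $\E[\lambda^T_s]\leq\mu^T/(1-a_T)$ and the resulting Lipschitz-in-expectation estimate are correct and coincide with the paper's first step. The genuine gap is exactly at the point you flag as ``mildly delicate'' and then dismiss: a first-moment bound on the increments of an increasing process does \emph{not} control the uniform modulus $w(\Lambda^T,\eta)$. Take $A^n_t=\big(n(t-U)\big)_+\wedge 1$ with $U$ uniform on $[0,1]$: each $A^n$ is continuous, increasing, and satisfies $\E[A^n_t-A^n_s]\leq t-s$, yet $w(A^n,\eta)=1$ whenever $\eta\geq 1/n$, so $(A^n)$ is not $\mathbb{C}$-tight. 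The Markov-plus-union-bound route you allude to gives at best $\mathbb{P}\big(\max_k(\Lambda^T_{(k+1)\eta}-\Lambda^T_{k\eta})>\varepsilon\big)\leq c/\varepsilon$, which does not vanish as $\eta\to 0$.

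The paper's argument is structured differently on both counts. It never touches the uniform modulus: from $\E[X^T_1]=\E[\Lambda^T_1]\leq c$ and monotonicity it asserts tightness in $D$, and then upgrades to $\mathbb{C}$-tightness via Proposition~VI.3.26 of Jacod--Shiryaev, using that $\Lambda^T$ is continuous and that the jumps of $X^T$ have deterministic size $(1-a_T)/(T^{\alpha}\mu^*\delta^{-1})\to 0$. In particular the paper treats $X^T$ and $\Lambda^T$ by the \emph{same} two-line argument; your detour through $X^T-\Lambda^T\to 0$ is correct (it is in fact the content of the paper's next lemma, used later in the proof of Proposition~\ref{prop_main}), but it is unnecessary for the present statement.
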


\begin{proof}
From \cite{bacry2013some}, we get that the expectation of the Hawkes process $N^T_t$ satisfies
$$\E[N^T_t]=\mu_Tt+\mu_T\int_0^t\psi^T(t-s)sds\leq t\mu_T(1+\|\psi^T\|_{1}).$$
Therefore, since 
$$\|\psi^T\|_{1}\leq \frac{c}{1-a_T},$$
we get
$$\E[X^T_1]=\E[\Lambda^T_1]\leq c.$$
The tightnesses of $X^T$ and $\Lambda^T$ then follow, using the fact that both processes are increasing.\\

\noindent Moreover, since $(1-a_T)/T^{\alpha}$ tends to zero, the maximum jump size of $X^T$ and $\Lambda^T$ (which is continuous) goes to zero as $T$ tends to infinity. From Proposition VI-3.26 in \cite{jacod2003limit}, this implies the $\mathbb{C}$-tightness of $X^T$ and $\Lambda^T$.
\end{proof}

\noindent We now give the proof of Proposition \ref{prop_main}. It is easy to get that the angle bracket of $Z^T$ is $\Lambda^T$. From Lemma \ref{ctight}, it is $\mathbb{C}$-tight. Thus, from Theorem VI-4.13 in \cite{jacod2003limit}, the sequence $(Z^T)$ is tight. Finally, marginal tightnesses imply the joint tightness of $(Z^T,X^T)$.\\

\noindent Let us now consider a subsequence $(Z^{T_n},X^{T_n})$ converging towards a process that we denote by $(Z,X)$. Using Proposition VI-6.26 in \cite{jacod2003limit} together with the fact that the bracket of $(Z^{T_n})$ is $(X^{T_n})$, we get that $X=[Z,Z]$.\\

\noindent Since $\sqrt{\frac{1-a_T}{T^{\alpha}}}$ goes to $0$, the maximum jump size of $Z^{T_n}$ tends to zero. Therefore, $Z^{T_n}$ is $\mathbb{C}$-tight and so the limit $Z$ is continuous. It remains to show that $Z$ is a martingale. Using Corollary IX.1.19 in \cite{jacod2003limit}, $Z$ is a local martingale. Moreover, the expectation of its bracket being finite, it is a martingale.

\subsection{Proof of Equation \eqref{eint} in Theorem \ref{theo_main}}

We start with the following lemma which shows that we can somehow work with $\Lambda^T$ rather than with $X^T$.

\begin{lemma}\label{samelim}
The sequence of martingales $X^T-\Lambda^T$ tends to zero in probability, uniformly on $[0,1]$.
\end{lemma}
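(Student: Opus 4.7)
The plan is to observe that $X^T-\Lambda^T$ is itself, up to a deterministic scalar that vanishes in $T$, the martingale $Z^T$ already analyzed in Proposition \ref{prop_main}, and then to apply Doob's maximal inequality using the uniform bound on $\E[\Lambda^T_1]$ derived in Lemma \ref{ctight}.

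Concretely, I would first note directly from the definition of $Z^T$ that
$$X^T_t-\Lambda^T_t=\sqrt{\frac{1-a_T}{T^\alpha\mu^*\delta^{-1}}}\,Z^T_t,$$
so that under Assumption \ref{speed} the deterministic prefactor is of order $T^{-\alpha}$ and hence tends to $0$. It therefore suffices to show that $\sup_{t\in[0,1]}|Z^T_t|$ is bounded in $L^2$ uniformly in $T$. For this I would use that $Z^T$ is a martingale with predictable quadratic variation $\Lambda^T$ (the jumps of $X^T$ are of size $(1-a_T)/(T^\alpha\mu^*\delta^{-1})$ and $\Lambda^T$ is the compensator of $X^T$), so Doob's $L^2$ inequality gives
$$\E\bigl[\sup_{t\in[0,1]}(Z^T_t)^2\bigr]\le 4\,\E\bigl[\langle Z^T\rangle_1\bigr]=4\,\E[\Lambda^T_1],$$
and the right-hand side is bounded uniformly in $T$ by the estimate already carried out in the proof of Lemma \ref{ctight}. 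Combining these two facts yields $\E[\sup_{t\in[0,1]}(X^T_t-\Lambda^T_t)^2]\to 0$, which is stronger than the claimed convergence in probability.

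The argument is entirely routine and presents no real obstacle; its purpose is to justify that in the rest of the proof we may work with the compensator $\Lambda^T$ in place of the counting process $X^T$ when passing to the limit in the Volterra-type equation for the intensity.
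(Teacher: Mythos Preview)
Your argument is correct and is essentially the same as the paper's: both apply Doob's $L^2$ inequality to the martingale $X^T-\Lambda^T$ (the paper writes it as a rescaling of $M^T$, you as a rescaling of $Z^T$) and then invoke the uniform bound $\E[\Lambda^T_1]\le c$ from Lemma~\ref{ctight} to conclude that $\E[\sup_{t\in[0,1]}(X^T_t-\Lambda^T_t)^2]$ is of order $(1-a_T)/T^{\alpha}\to 0$. The only cosmetic difference is that the paper uses the quadratic variation $[M^T,M^T]=N^T$ while you use the angle bracket $\langle Z^T\rangle=\Lambda^T$, but these have the same expectation.
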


\begin{proof}
We have
$$X^T_t-\Lambda^T_t=\frac{1-a_T}{T^{\alpha}\mu^*\delta^{-1}}M^T_{tT}.$$
Applying Doob's inequality to the martingale $M^T$, we get
\begin{equation*}\label{doob}
\E[\sup_{t\in[0,1]}\{(X^T_t-\Lambda^T_t)^2 \}]\leq c(\frac{1-a_T}{T^{\alpha}})^2 \E[(M^T_T)^2].
\end{equation*}
Then, the bracket of $M^T$ being $N^T$, we deduce
$$\E[\sup_{t\in[0,1]}\{(X^T_t-\Lambda^T_t)^2 \}]\leq c(\frac{1-a_T}{T^{\alpha}})^2 \E[N^T_T]\leq c\frac{\mu_T(1-a_T)}{T^{2\alpha}}\leq c\frac{1-a_T}{T^{\alpha}},$$
which ends the proof.
\end{proof}
\noindent We now state a lemma which will be useful in the proof of Equation \eqref{eint}.

\begin{lemma}\label{ucp}
The sequence of measures with density $\rho^T(x)$ defined by Equation \eqref{rhopsi} converges weakly towards the measure with density $\lambda x^{\alpha-1}E_{\alpha,\alpha}(-\lambda x^{\alpha})$. In particular, over $[0,1]$,
$$F^T(t)=\int_0^t \rho^T(x)dx$$
converges uniformly towards
$$F^{\alpha,\lambda}(t)=\int_0^t f^{\alpha,\lambda}(x)dx.$$
\end{lemma}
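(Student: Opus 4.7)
The plan is to read off the result directly from the Laplace transform computation already performed in Section \ref{assumpt}, and then appeal to the continuity theorem for Laplace transforms on $[0,\infty)$. Recall that $\rho^T$ is the density of the probability measure on $\mathbb{R}^+$ corresponding to the random variable $J^T$, and that we have the explicit formula
$$\widehat{\rho}^T(z)=\frac{\widehat{\phi}(z/T)}{1-\frac{a_T}{1-a_T}\bigl(\widehat{\phi}(z/T)-1\bigr)}.$$
Similarly, $f^{\alpha,\lambda}$ is the density of a probability measure, since by construction its Laplace transform is $\lambda/(\lambda+z^\alpha)$, which equals $1$ at $z=0$. So the statement is really a convergence of probability measures.

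First, I would establish pointwise convergence of the Laplace transforms. Fixing $z>0$ and using the Karamata-type expansion $\widehat{\phi}(u)=1-\delta u^\alpha+o(u^\alpha)$ already derived in Section \ref{assumpt}, one has $\widehat{\phi}(z/T)\to 1$, and moreover
$$\frac{a_T}{1-a_T}\bigl(1-\widehat{\phi}(z/T)\bigr)=\frac{a_T\,\delta\,z^\alpha}{T^\alpha(1-a_T)}+o(1).$$
Invoking Assumption \ref{speed}, the coefficient $a_T\delta /\bigl(T^\alpha(1-a_T)\bigr)$ converges to $1/\lambda$, so that
$$\widehat{\rho}^T(z)\longrightarrow \frac{\lambda}{\lambda+z^\alpha}=\widehat{f^{\alpha,\lambda}}(z).$$
At $z=0$ the limit is $1$, matching the probability-measure normalization. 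By the continuity theorem for Laplace transforms (see for instance Feller, Vol.~II, XIII.1), this pointwise convergence implies that the measures with density $\rho^T$ converge weakly on $\mathbb{R}^+$ towards the measure with density $f^{\alpha,\lambda}$.

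For the second assertion, observe that $F^T$ and $F^{\alpha,\lambda}$ are the distribution functions of these probability measures, and that $F^{\alpha,\lambda}$ is continuous on $[0,1]$ (in fact smooth on $(0,1]$ by Proposition \ref{boundfalpha}, with $F^{\alpha,\lambda}(0)=0$). The standard result that weak convergence of probability measures combined with continuity of the limiting distribution function upgrades the pointwise convergence of CDFs to uniform convergence on compact sets (Pólya's theorem) then gives uniform convergence of $F^T$ to $F^{\alpha,\lambda}$ on $[0,1]$.

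This proof is essentially routine once the Laplace transform algebra of Section \ref{assumpt} is in hand; the only point requiring mild care is checking that the error term $o(T^{-\alpha})$ in the expansion of $\widehat\phi(z/T)$ is negligible against the leading $T^{-\alpha}$ contribution after multiplication by $a_T/(1-a_T)$, which is exactly what Assumption \ref{speed} ensures. There is no serious obstacle.
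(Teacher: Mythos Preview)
Your proof is correct and follows exactly the same route as the paper's own argument, which simply says that the result follows from the Laplace transform convergence already computed in Section~\ref{assumpt}. You have merely spelled out the two standard ingredients (the continuity theorem for Laplace transforms and P\'olya's theorem for uniform convergence of CDFs) that the paper leaves implicit.
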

\begin{proof}
The proof of this result is obtained by showing that the Laplace transform of the measure with density $\rho^T(x)$ converges towards the Laplace transform of the measure with density $\lambda x^{\alpha-1}E_{\alpha,\alpha}(-\lambda x^{\alpha})$. This has already been done in Section \ref{assumpt}.
\end{proof}

\noindent We now give the proof of Equation \eqref{eint}. Let us consider a converging subsequence $(Z^{T_n},X^{T_n})$ and write $(Z,X)$ its limit. Abusing notation slightly, we write $(Z^T,X^T)$ instead of $(Z^{T_n},X^{T_n})$. Using Skorokhod's representation theorem, there exists a probability space on which one can define copies in law of the $(Z^T,X^T)$ converging almost surely for the Skorohod topology to a random variable with the same law as $(Z,X)$. We now work with this sequence of variables converging almost surely and their limit. The processes $Z$ and $X$ being continuous, we have
\begin{equation}\label{ucpxz}
\sup_{t\in [0,1]} |X^T_t-X_t|\rightarrow 0,~\sup_{t\in [0,1]} |Z^T_t-Z_t|\rightarrow 0.\end{equation}

\noindent Let us now rewrite the cumulated intensity. For all $t\geq 0$, we have
$$\int_0^{t} \lambda^T_sds=t\mu_T+\int_0^t \phi^T(t-s)(\int_0^s \lambda^T_u du)ds+\int_0^t\phi^T(t-s) M^T_sds.$$
Then, using that $\psi^T \ast \phi^T=\psi^T-\phi^T$, where $\psi^T$ is defined in Equation \eqref{defpsi}, remark that
\begin{align*}
\int_0^t \psi^T(t-s) \int_0^s \phi^T(s-r) M^T_r dr ds&= \int_0^t \int_0^t \mathrm{1}_{r\leq s}  \psi^T(t-s) \phi^T(s-r)dsM^T_r dr\\
 &= \int_{0}^t \int_0^{t-r} \psi^T(t-r-s)  \phi^T(s)ds M^T_rdr\\
&= \int_{0}^t  \psi^T \ast \phi^T(t-r) M^T_rdr\\
&= \int_{0}^t  \psi^T (t-r) M^T_rdr-\int_{0}^t  \phi^T (t-r) M^T_rdr.
\end{align*}
This together with Lemma 3 in \cite{bacry2013some} yields
$$\int_0^{t} \lambda^T_sds=t\mu_T+\int_0^t \psi^T(t-s)s\mu_Tds+\int_0^t\psi^T(t-s) M^T_sds.$$
Therefore, replacing $t$ by $Tt$, multiplying by $(1-a_T)/(T^{\alpha}\mu^*\delta^{-1})$, and writing $$u_T=\frac{\mu_T}{\mu^*\delta^{-1}T^{\alpha-1}},$$ we get
$\Lambda^T(t)=T_1+T_2+T_3,$ with
\begin{align*}
T_1&=(1-a_T)tu_T,\\
T_2&=T(1-a_T)u_T\int_0^{t}\psi^T(T(t-s))s ds,\\
T_3&=T^{1-\alpha/2}\sqrt{\frac{(1-a_T)}{\mu^*\delta^{-1}}}\int_0^t\psi^T(T(t-s)) Z^T_sds.
\end{align*}
Since $u_T$ converges to $1$, we get that $T_1$ goes to zero. For $T_2$, note that integrating by parts, we have
$$T_2=a_Tu_T\int_0^{t}\rho^T(t-s)s ds=a_Tu_T\int_0^{t}F^T(t-s) ds.$$
Using Lemma \ref{ucp} and integrating by parts again, we obtain that $T_2$ tends uniformly to 
$$\int_0^{t}F^{\alpha,\lambda}(t-s) ds=\int_0^{t}f^{\alpha,\lambda}(t-s)s ds.$$

\noindent We now turn to $T_3$. Remark that
$$T_3=\frac{a_T}{\sqrt{T^{\alpha}(1-a_T)\mu^*\delta^{-1}}}\int_0^t\rho^T(t-s) Z^T_sds$$ 
and recall that
$$Z_t^T=\sqrt{\frac{T^{\alpha}\mu^*\delta^{-1}}{1-a_T}}(X^T_t-\Lambda^T_t).$$
Thus, using that $X^T$ is piecewise constant, applying integration by parts, we get (pathwise)
$$\int_0^t\rho^T(t-s) Z^T_sds=\int_0^tF^T(t-s)dZ^T_s$$
and in the same way
$$\int_0^tf^{\alpha,\lambda}(t-s) Z^T_sds=\int_0^tF^{\alpha,\lambda}(t-s)dZ^T_s.$$
Then,
$$\E\Big[\big(\int_0^t(F^{\alpha,\lambda}(t-s)-F^T(t-s))dZ^{T}_s\big)^2\Big]\leq c\int_0^t\big(F^{\alpha,\lambda}(t-s)-F^T(t-s)\big)^2d s,$$
which tends to zero thanks to Lemma \ref{ucp}. Furthermore, using \eqref{ucpxz}, we get that
$$\int_0^t\lambda(t-s)^{\alpha-1}E_{\alpha,\alpha}(-\lambda (t-s)^{\alpha})|Z_s-Z_s^T|ds$$ also tends to zero.
Consequently, we finally obtain that for any $t$, $T_3$ converges to
$$\frac{1}{\sqrt{\mu^*\lambda}}\int_0^t\lambda(t-s)^{\alpha-1}E_{\alpha,\alpha}(-\lambda (t-s)^{\alpha})Z_sds.$$

\noindent Since $Z$ is a continuous martingale, the fact that $Z_t=B_{X_t}$ is a consequence of the Dambis-Dubin-Schwarz theorem, see for example Theorem V-1.6 in \cite{revuz1999continuous}.

\subsection{Proof of the H\"older property for $X$ in Theorem \ref{theo_main}}

We start with the following lemma.

\begin{lemma}
\label{rec}
Let $B$ be a Brownian motion and $X$ a solution of \eqref{eint} associated to $B$. Let $H$ in $(0,1)$. If $X$ has H\"older regularity $H$ on $[0,1]$, then for any $\varepsilon>0$, $X$ has also H\"older regularity $((\alpha+H/2)\wedge 1)-\varepsilon$ on $[0,1]$.
\end{lemma}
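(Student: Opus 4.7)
The plan is to estimate, for fixed $0\le t'<t\le 1$ with $h:=t-t'$, the increment $X_t-X_{t'}$ directly from \eqref{eint}. The deterministic part reads $tF^{\alpha,\lambda}(t)-\int_0^t u\,f^{\alpha,\lambda}(u)\,du$; its derivative is the bounded function $F^{\alpha,\lambda}$, so this part is Lipschitz and does not constrain the H\"older exponent. Hence the analysis reduces to the Volterra piece $Y_t:=\int_0^t f^{\alpha,\lambda}(t-s)\,B_{X_s}\,ds$.

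Set $g(s):=B_{X_s}$. Since $X$ is H\"older $H$ by hypothesis and Brownian paths are almost surely H\"older of every order below $1/2$, $g$ is almost surely H\"older of every order $\gamma<H/2$. Crucially, the joint initial condition $X_0=B_0=0$ forces $g(0)=0$, so $|g(s)|\le c\,s^{\gamma}$. Writing $g(s)=g(t')+[g(s)-g(t')]$ in both halves of the split $\int_0^t=\int_0^{t'}+\int_{t'}^t$, the constant-in-$g$ contributions telescope and the decomposition becomes
\[
Y_t-Y_{t'}=g(t')\,[F^{\alpha,\lambda}(t)-F^{\alpha,\lambda}(t')]+A+B,
\]
with
\[
A=\int_0^{t'}[f^{\alpha,\lambda}(t-s)-f^{\alpha,\lambda}(t'-s)]\,[g(s)-g(t')]\,ds,\quad B=\int_{t'}^t f^{\alpha,\lambda}(t-s)\,[g(s)-g(t')]\,ds.
\]

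The bound on $B$ follows from $|g(s)-g(t')|\le c h^{\gamma}$ and $F^{\alpha,\lambda}(h)\le c h^\alpha$, giving $|B|\le c h^{\alpha+\gamma}$. For the constant piece I combine $|g(t')|\le c(t')^{\gamma}$ with $|F^{\alpha,\lambda}(t)-F^{\alpha,\lambda}(t')|\le c h^\alpha$ when $t'\le h$ and $|F^{\alpha,\lambda}(t)-F^{\alpha,\lambda}(t')|\le c h(t')^{\alpha-1}$ when $t'>h$ (using $f^{\alpha,\lambda}(u)\le c u^{\alpha-1}$); treating the subcases $\alpha+\gamma\ge 1$ and $\alpha+\gamma<1$ separately (the latter via $t'\ge h$) yields the bound $c h^{(\alpha+\gamma)\wedge 1}$. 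For $A$, I split the range at $(t'-h)\vee 0$: on the short interval the triangle inequality on $f^{\alpha,\lambda}$ and the estimates $F^{\alpha,\lambda}(h),F^{\alpha,\lambda}(2h)\le c h^\alpha$ give $c h^{\alpha+\gamma}$; on $[0,t'-h]$ the mean value theorem together with $|(f^{\alpha,\lambda})'(u)|\le c u^{\alpha-2}$ from Proposition \ref{boundfalpha} gives $|f^{\alpha,\lambda}(t-s)-f^{\alpha,\lambda}(t'-s)|\le c h(t'-s)^{\alpha-2}$, and integration against $(t'-s)^{\gamma}$ produces $c h^{(\alpha+\gamma)\wedge 1-\varepsilon}$, with the $\varepsilon$ absorbing the logarithmic factor that appears exactly when $\alpha+\gamma=1$. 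Collecting these, $|Y_t-Y_{t'}|\le c h^{(\alpha+H/2)\wedge 1-\varepsilon}$ almost surely, which, combined with the Lipschitz bound on the deterministic part, yields the claimed H\"older regularity of $X$.

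The main obstacle is the constant-in-$g$ term $g(t')\,[F^{\alpha,\lambda}(t)-F^{\alpha,\lambda}(t')]$: since $F^{\alpha,\lambda}$ itself is only H\"older of order $\alpha$ near $0$, a naive bound would cap the regularity of $X$ at $\alpha$ irrespective of $H$ and forbid any bootstrap. The joint vanishing $X_0=B_0=0$, through $|g(t')|\le c(t')^{\gamma}$, is precisely what unlocks the expected fractional-integration gain $\alpha$ on top of the current regularity $\gamma=H/2-\varepsilon$ of $g=B\circ X$.
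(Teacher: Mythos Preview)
Your proof is correct, but proceeds differently from the paper's. The paper invokes fractional calculus: since $Z=B\circ X$ is H\"older of order $H/2-\varepsilon'$ with $Z(0)=0$, Proposition~\ref{cfdif} makes $D^{H/2-\varepsilon}Z$ continuous, and the fractional integration-by-parts of Corollary~\ref{fipp1} rewrites the Volterra term as $\int_0^t I^{H/2-\varepsilon}f^{\alpha,\lambda}(t-s)\,D^{H/2-\varepsilon}Z_s\,ds$. The kernel $I^{H/2-\varepsilon}f^{\alpha,\lambda}$ now has the milder singularity $x^{\alpha+H/2-\varepsilon-1}$ (Proposition~\ref{boundfalpha}), and Proposition~\ref{power} applied to this convolution with a merely continuous second factor yields the claimed H\"older exponent.

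You bypass the fractional machinery entirely and estimate the increment directly, exploiting $|f^{\alpha,\lambda}(x)|\le cx^{\alpha-1}$ and $|(f^{\alpha,\lambda})'(x)|\le cx^{\alpha-2}$ together with the H\"older bound on $g=B\circ X$. In effect you are proving in-line a sharpened form of Proposition~\ref{power} in which $g$ is H\"older $\gamma$ with $g(0)=0$ rather than merely continuous, and you recover the exponent $\alpha+\gamma$ without first shifting regularity into the kernel. Both arguments hinge on the same fact --- that $Z_0=B_{X_0}=0$ --- which the paper uses through the hypothesis $f(0)=0$ in Proposition~\ref{cfdif} and you use through $|g(t')|\le c(t')^\gamma$. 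Your route is more elementary and self-contained; the paper's is more modular and reuses the appendix tools without redoing any increment estimate.
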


\begin{proof}Let $\varepsilon>0$ and $Z_t=B_{X_t}$.
The function
$$t\rightarrow \int_0^tsf^{\alpha,\lambda}(t-s)ds$$ being $\mathcal{C}^1$, it is enough to show that
$$t\rightarrow\int_0^tf^{\alpha,\lambda} (t-s)Z_sds$$ has H\"older regularity $((\alpha+H/2)\wedge 1)-\varepsilon$. Since for any $\varepsilon'>0$, $Z$ has H\"older regularity $(H/2-\varepsilon')$, by Proposition \ref{cfdif}, it is $(H/2-\varepsilon)$ fractionally differentiable and $D^{H/2-\varepsilon} Z$ is continuous. Using the fact that $f^{\alpha,\lambda}$ is fractionally integrable, from Corollary \ref{fipp1}, we get
$$\int_0^tf^{\alpha,\lambda} (t-s)Z_sds=\int_0^tI^{H/2-\varepsilon} f^{\alpha,\lambda} (t-s)D^{H/2-\varepsilon}Z_sds.$$
Finally, the properties of $I^{H/2-\varepsilon} f^{\alpha,\lambda}$ stated in Proposition \ref{boundfalpha} together with Proposition \ref{power} give the result.
\end{proof}

\noindent We now show that for $B$ be a Brownian motion and $X$ a solution of \eqref{eint} associated to $B$, then, for any $\varepsilon>0$, almost surely, the process $X$ has H\"older regularity $(1\wedge 2\alpha)-\varepsilon$ on $[0,1]$.\\

\noindent Let $M$ be the supremum of the H\"older exponents of $X$. From Proposition \ref{boundfalpha} together with Proposition \ref{power}, we get that $M\geq \alpha$.\\

\noindent Let us now assume that $M<(1\wedge 2 \alpha)$. Then we can find some $H<M$ and some $\varepsilon>0$ such that 
$$M<\big((\alpha+H/2)\wedge 1\big)-\varepsilon.$$ Thus, since $X$ has H\"older regularity $H$, Lemma \ref{rec} implies that $X$ has also H\"older regularity
$$\big((\alpha+H/2)\wedge 1\big)-\varepsilon,$$ which is a contradiction. Therefore $M\geq (1\wedge 2 \alpha)$, which ends the proof.

\subsection{Proof of Theorem \ref{theo_deriv}}

First remark that thanks to the H\"older property of the Brownian motion together with that of the process $X$, we immediately deduce the following lemma.

\begin{lemma}
\label{holder}
Let $B$ be a Brownian motion, $X$ a solution of \eqref{eint} associated to $B$ and $Z_t=B_{X_t}$. Then, for any $\varepsilon>0$, almost surely, the process $Z$ has H\"older regularity $(1/2\wedge \alpha)-\varepsilon$ on $[0,1]$.
\end{lemma}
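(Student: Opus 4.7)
The plan is to combine two Hölder properties already in hand: on the one hand, Brownian motion restricted to any compact interval is almost surely Hölder of every exponent strictly less than $1/2$; on the other hand, the Hölder regularity of $X$ established in Theorem \ref{theo_main} gives that, for any $\varepsilon_2>0$, almost surely $X$ is Hölder of exponent $(1\wedge 2\alpha)-\varepsilon_2$ on $[0,1]$. Composing a Hölder function of exponent $\beta$ with a Hölder function of exponent $\gamma$ yields a Hölder function of exponent $\beta\gamma$, and the key arithmetic identity $\tfrac{1}{2}(1\wedge 2\alpha)=\tfrac{1}{2}\wedge\alpha$ is what produces the target exponent.

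Concretely, first I would fix $\varepsilon>0$ and choose $\varepsilon_1,\varepsilon_2>0$ small enough that
$$\bigl(\tfrac{1}{2}-\varepsilon_1\bigr)\bigl((1\wedge 2\alpha)-\varepsilon_2\bigr)\;\geq\;(\tfrac{1}{2}\wedge\alpha)-\varepsilon.$$
Next, since $X$ is almost surely continuous on $[0,1]$, the random variable $R:=\sup_{t\in[0,1]}X_t$ is finite almost surely, so the trajectories of $B$ on the compact interval $[0,R]$ are almost surely Hölder of exponent $\tfrac{1}{2}-\varepsilon_1$, with some (random) constant $C_B$. Using also a (random) Hölder constant $C_X$ for $X$ at exponent $(1\wedge 2\alpha)-\varepsilon_2$, I would then estimate, for $s,t\in[0,1]$,
$$|Z_t-Z_s|=|B_{X_t}-B_{X_s}|\;\leq\;C_B\,|X_t-X_s|^{\frac{1}{2}-\varepsilon_1}\;\leq\;C_B\,C_X^{\frac{1}{2}-\varepsilon_1}\,|t-s|^{(\frac{1}{2}-\varepsilon_1)((1\wedge 2\alpha)-\varepsilon_2)},$$
which by the choice of $\varepsilon_1,\varepsilon_2$ is bounded by a random constant times $|t-s|^{(1/2\wedge\alpha)-\varepsilon}$.

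There is essentially no serious obstacle here: the statement is flagged in the text as an immediate deduction, and the only mild care needed is to restrict $B$ to the (random but almost surely bounded) range of $X$ before applying the standard local Hölder regularity of Brownian paths. A countable intersection over a sequence $\varepsilon\downarrow 0$ yields the result for all $\varepsilon>0$ on a single almost sure event.
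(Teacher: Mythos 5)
Your proof is correct and follows the same route as the paper, which simply notes that the lemma is an immediate consequence of composing the H\"older regularity of Brownian paths (exponent $1/2-\varepsilon_1$ on the a.s.\ bounded range of $X$) with the H\"older regularity $(1\wedge 2\alpha)-\varepsilon_2$ of $X$ from Theorem \ref{theo_main}, via the identity $\tfrac{1}{2}(1\wedge 2\alpha)=\tfrac{1}{2}\wedge\alpha$. The only difference is that you spell out the details the paper leaves implicit.
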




\noindent We now give the proof of Theorem \ref{theo_deriv}. Using Proposition \ref{boundfalpha}, Lemma \ref{holder} and Corollary \ref{fipp2}, for any $\nu\in(0,\alpha)$, we can rewrite Equation \eqref{eint} as
$$X_t=\int_0^tsf^{\alpha,\lambda}(t-s)ds+\frac{1}{\sqrt{\mu^*\lambda}} \int_0^t D^\nu f^{\alpha,\lambda}(t-s)  I^\nu Z_sds.$$
Moreover, taking $\nu>1/2$, since $Z$ is $1-\nu$ fractionally differentiable, we get $$I^\nu Z_s=\int_0^s D^{1-\nu}Z_udu.$$
Thus, using Fubini's theorem, we obtain
\begin{align*}
\int_0^t D^\nu f^{\alpha,\lambda}(t-s)  I^\nu Z_sds&=\int_0^t \int_0^s D^\nu f^{\alpha,\lambda}(t-s) D^{1-\nu}Z_udu ds\\
&=\int_0^t \int_u^t D^\nu f^{\alpha,\lambda}(t-s) D^{1-\nu}Z_u ds du\\
&=\int_0^t \int_u^t D^\nu f^{\alpha,\lambda}(s-u) D^{1-\nu}Z_u ds du\\
&=\int_0^t \int_0^s D^\nu f^{\alpha,\lambda}(s-u) D^{1-\nu}Z_u du ds.
\end{align*}
Hence, we get
$$X_t=\int_0^t Y_s ds,$$
with $$Y_s=F^{\alpha,\lambda}(s)+\frac{1}{\sqrt{\mu^*\lambda}}\int_0^s D^\nu f^{\alpha,\lambda}(s-u)  D^{1-\nu} Z_udu.$$
From Proposition \ref{boundfalpha} together with Proposition \ref{power}, we have that $Y$ has H\"older regularity $(\alpha-\nu)$. Thus, taking $\nu$ close enough to $1/2$, we get that for any $\varepsilon>0$, $Y$ has H\"older regularity $(\alpha-1/2-\varepsilon)$. This implies that $X$ is differentiable with derivative $Y$.\\

\noindent Now, since $Z$ is a continuous martingale with bracket $X$ and because $\nu>1/2$, we can use the stochastic Fubini theorem, see for example \cite{veraar2012stochastic}, to obtain 
\begin{align*}
D^{1-\nu} Z_s&= \frac{1}{\Gamma(\nu)}\frac{d}{ds}\int_0^s \frac{Z_v}{(s-v)^{1-\nu}}dv\\
&= \frac{1}{\Gamma(\nu)}\frac{d}{ds}\int_0^s\int_0^v \frac{1}{(s-v)^{1-\nu}}dZ_udv\\
&= \frac{1}{\Gamma(\nu)}\frac{d}{ds}\int_0^s\int_u^s \frac{1}{(s-v)^{1-\nu}}dvdZ_u\\
&=\frac{1}{\Gamma(\nu+1)}\frac{d}{ds}\int_0^s (s-u)^\nu dZ_u.
\end{align*}

\noindent Therefore,
$$Y_t=F^{\alpha,\lambda}(t)+\frac{1}{\sqrt{\mu^*\lambda}}\int_0^t D^\nu f^\alpha(t-s)  \frac{1}{\Gamma(\nu+1)}\frac{d}{ds}\int_0^s (s-u)^\nu dZ_uds.$$
Using Fubini's theorem twice and the fact that $f\ast (g')=(f\ast g)'$, we derive
\begin{align*}
Y_t&=F^{\alpha,\lambda}(t)+\frac{1}{\sqrt{\mu^*\lambda}} \frac{d}{dt}\int_0^t \frac{1}{\Gamma(\nu+1)} \int_u^tD^\nu f^\alpha(t-s)(s-u)^\nu ds  dZ_u\\
&= F^{\alpha,\lambda}(t)+\frac{1}{\sqrt{\mu^*\lambda}} \frac{d}{dt}\int_0^t I^{\nu+1} D^\nu f^\alpha(t-u)  dZ_u \\
&= F^{\alpha,\lambda}(t)+\frac{1}{\sqrt{\mu^*\lambda}} \frac{d}{dt}\int_0^t \int_0^{v}I^{\nu} D^\nu f^\alpha(v-u)  dZ_udv \\
&=F^{\alpha,\lambda}(t)+\frac{1}{\sqrt{\mu^*\lambda}} \int_0^t  f^\alpha(t-u)  dZ_u.
\end{align*}
Moreover, using Theorem V-3.8 of \cite{revuz1999continuous}, there exists a Brownian motion $B^1$ such that
$$Z_t=\int_0^t\sqrt{Y_s} dB^1_s.$$
So, consider now the process $(\tilde Y_t)$ defined by 
$$\tilde Y_t=F^{\alpha,\lambda}(t)+\frac{1}{\sqrt{\mu^*\lambda}} \int_0^t  f^\alpha(t-u) \sqrt{Y_s} dB^1_s.$$
Going backward in the previous computations for $Y_t$ and $D^{1-\nu} Z_s$, we remark that
$$\tilde Y_t=F^{\alpha,\lambda}(t)+\frac{1}{\sqrt{\mu^*\lambda}}\int_0^t D^\nu f^\alpha(t-s)  \frac{1}{\Gamma(\nu+1)}\frac{d}{ds}\int_0^s (s-u)^\nu \sqrt{Y_u} dB^1_uds$$
and
\begin{align*}
\frac{1}{\Gamma(\nu+1)}\frac{d}{ds}\int_0^s (s-u)^\nu \sqrt{Y_u} dB^1_u&= \frac{1}{\Gamma(\nu)}\int_0^s\int_0^v \frac{1}{(s-v)^{1-\nu}} \sqrt{Y_u} dB^1_udv\\
&=\frac{1}{\Gamma(\nu)}\frac{d}{ds}\int_0^s \frac{1}{(s-v)^{1-\nu}} \big(\int_0^v\sqrt{Y_u} dB^1_u\big)dv\\
&= \frac{1}{\Gamma(\nu)}\frac{d}{ds}\int_0^s \frac{Z_v}{(s-v)^{1-\nu}}dv\\
&=D^{1-\nu} Z_s.
\end{align*}
Therefore,
$$\tilde Y_t=F^{\alpha,\lambda}(t)+\frac{1}{\sqrt{\mu^*\lambda}}\int_0^t D^\nu f^\alpha(t-s) D^{1-\nu} Z_sds=Y_t.$$
Consequently,
$$Y_t=F^{\alpha,\lambda}(t)+\frac{1}{\sqrt{\mu^*\lambda}} \int_0^t  f^\alpha(t-u) \sqrt{Y_s} dB^1_s.$$
\appendix

\section{Technical appendix}

In this section, we gather some useful results from \cite{samko1993fractional} and recall a theorem on the regularity of the convolution product. We denote by $H^\lambda$ the set of functions on $[0,1]$ with H\"older regularity $\lambda$.

\subsection{Fractional integrals and derivatives}

\noindent Lemma 13.1 in \cite{samko1993fractional} relates the H\"older exponent of a function and the H\"older exponent of its fractional derivatives.

\begin{proposition}
\label{cfdif}
If $f \in H^\lambda$ and $f(0)=0$, then for any $\alpha<\lambda$, $f$ admits a fractional derivative of order $\alpha$ and $D^\alpha f \in H^{\lambda-\alpha}$.
\end{proposition}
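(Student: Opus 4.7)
The plan is to reduce the Riemann--Liouville derivative to its Marchaud form, which is much better suited to pointwise H\"older estimates. First I would exploit $f(0)=0$ and the identity $f(t)=f(x)-(f(x)-f(t))$ to rewrite
$$I^{1-\alpha} f(x) = \frac{x^{1-\alpha} f(x)}{\Gamma(2-\alpha)} - \frac{1}{\Gamma(1-\alpha)} \int_0^x \frac{f(x) - f(t)}{(x-t)^{\alpha}} dt.$$
Because $\alpha<\lambda$, the H\"older bound $|f(x)-f(t)|\leq C|x-t|^{\lambda}$ makes both the integrand and its $x$-derivative dominated by integrable functions, so one can legitimately differentiate under the integral sign to obtain the Marchaud representation
$$D^\alpha f(x) = \frac{1}{\Gamma(1-\alpha)} \left[ \frac{f(x)}{x^\alpha} + \alpha \int_0^x \frac{f(x) - f(t)}{(x-t)^{1+\alpha}} dt \right].$$
This representation already yields the existence statement, and every subsequent estimate I need is a manipulation of this formula.

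For the H\"older regularity, fix $0<x<y\leq 1$, set $h=y-x$, and aim for $|D^\alpha f(y)-D^\alpha f(x)|\leq C h^{\lambda-\alpha}$. I would split according to whether $h$ is large or small compared to $x$. In the regime $x\leq 2h$, I bound $|D^\alpha f(x)|$ and $|D^\alpha f(y)|$ separately by plugging the Marchaud formula in and using $|f(t)|\leq C t^\lambda$; this gives $|D^\alpha f(x)|\leq C x^{\lambda-\alpha}$, hence each is $O(h^{\lambda-\alpha})$. In the regime $x>2h$ I decompose the difference as (i) a boundary term $f(y)/y^\alpha - f(x)/x^\alpha$, handled by combining the H\"older bound on $f$ with the smoothness of $t\mapsto t^{-\alpha}$ away from $0$; (ii) a new-strip integral $\int_x^y (f(y)-f(t))(y-t)^{-1-\alpha}\,dt$, bounded directly by $Ch^{\lambda-\alpha}$; and (iii) the residual
$$R:=\int_0^x \left[\frac{f(y)-f(t)}{(y-t)^{1+\alpha}} - \frac{f(x)-f(t)}{(x-t)^{1+\alpha}}\right]dt.$$

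The main obstacle is $R$. My plan is to split the bracketed integrand telescopically as
$$\frac{f(y)-f(x)}{(y-t)^{1+\alpha}} + (f(x)-f(t))\left[(y-t)^{-1-\alpha} - (x-t)^{-1-\alpha}\right],$$
and then split the integration domain at $x-h$. On the near-boundary piece $(x-h,x)$ I estimate each term separately by the H\"older norm of $f$, producing after integration a contribution $O(h^{\lambda-\alpha})$. On the far piece $(0,x-h)$ I use the uniform mean-value bound $|(y-t)^{-1-\alpha}-(x-t)^{-1-\alpha}|\leq C h(x-t)^{-2-\alpha}$ together with $|f(x)-f(t)|\leq C(x-t)^\lambda$, yielding an integrand of size $h(x-t)^{\lambda-2-\alpha}$ whose integral from $0$ to $x-h$ is of order $h\cdot h^{\lambda-1-\alpha}=h^{\lambda-\alpha}$ (using $\lambda-1-\alpha<0$, which is where the assumption $\alpha<\lambda<1$ enters crucially). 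The delicate bookkeeping is to ensure that the choice of splitting point $x-h$ makes the boundary contributions on both subintervals consistent, and that the dominated-convergence arguments used to justify differentiation under the integral in the derivation of the Marchaud formula remain valid after this telescoping.
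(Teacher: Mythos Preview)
The paper does not actually prove this proposition: it is stated in the appendix as a direct quotation of Lemma~13.1 in Samko--Kilbas--Marichev, with no argument given. Your proposal, by contrast, supplies a full proof, and the route you take---reduction to the Marchaud form followed by a near/far splitting for the H\"older increment---is precisely the classical argument that underlies the cited reference. So there is nothing to compare against on the paper's side; you are reconstructing the content of the citation.

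Your H\"older estimate is correct in every detail: the case split $x\le 2h$ versus $x>2h$, the treatment of the boundary term, the new-strip integral, and the telescoping of $R$ with a further split at $x-h$ all give contributions of order $h^{\lambda-\alpha}$ exactly as you claim.

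There is one genuine, if minor, gap. In deriving the Marchaud representation you say that the H\"older bound lets you ``differentiate under the integral sign''. But the $x$-derivative of the integrand $\dfrac{f(x)-f(t)}{(x-t)^{\alpha}}$ contains the term $\dfrac{f'(x)}{(x-t)^{\alpha}}$, and likewise differentiating $\dfrac{x^{1-\alpha}f(x)}{\Gamma(2-\alpha)}$ produces an $f'(x)$ term; these two contributions cancel, leaving the Marchaud expression, but the intermediate step requires $f$ to be differentiable, which you have not assumed. The standard remedies are either (i) to approximate $f$ in $H^{\lambda'}$ (for some $\alpha<\lambda'<\lambda$) by $C^1$ functions vanishing at $0$, for which your computation is valid, and then pass to the limit; or (ii) to verify directly that the antiderivative of the Marchaud expression equals $I^{1-\alpha}f$, using Fubini after a suitable $\varepsilon$-truncation of the singular integral. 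Either fix is short and does not affect the rest of your argument.
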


\noindent Equation 2.20 in \cite{samko1993fractional} is a fractional integration by parts formula which can be written as follows.

\begin{proposition}
\label{fipp}
If $\phi\in L^p$ and $\psi\in L^q$ with $1/p+1/q\leq 1+\alpha$, then $\phi$ and $\psi$ have an integral of order $\alpha$ and
$$\int_0^t\phi (t-s) I^\alpha \psi(s)ds=\int_0^tI^\alpha\phi (t-s)  \psi(s)ds.$$
\end{proposition}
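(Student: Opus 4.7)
The plan is to derive both the existence of the fractional integrals and the identity from a single Fubini argument applied to the triple integral obtained by unfolding the definition of $I^\alpha\psi$ on the left-hand side.

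First I would write, using the definition of the Riemann-Liouville integral,
$$\int_0^t \phi(t-s)\, I^\alpha \psi(s)\, ds = \frac{1}{\Gamma(\alpha)} \int_0^t \phi(t-s) \int_0^s \frac{\psi(u)}{(s-u)^{1-\alpha}}\, du\, ds,$$
and aim to swap the order of integration in $(s,u)$. Once the order is swapped, the inner integral becomes $\int_u^t \phi(t-s)(s-u)^{\alpha-1}\,ds$, and the change of variables $r = t-s$ turns it into $\int_0^{t-u} \phi(r)(t-u-r)^{\alpha-1}\,dr = \Gamma(\alpha)\, I^\alpha\phi(t-u)$. Renaming $u$ as $s$ then yields the right-hand side. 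So the proof reduces to justifying Fubini's theorem and, as a byproduct, to showing that the fractional integrals in question are well defined.

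The main obstacle is the integrability condition
$$\int_0^t\int_0^s |\phi(t-s)|\, \frac{|\psi(u)|}{(s-u)^{1-\alpha}}\, du\, ds < \infty,$$
and this is exactly where the hypothesis $1/p + 1/q \le 1 + \alpha$ enters. I would apply H\"older's inequality in $s$ with conjugate exponents $(p,p')$ to obtain the upper bound $\|\phi\|_p \cdot \| I^\alpha |\psi|\,\|_{L^{p'}(0,t)}$, and then control $I^\alpha|\psi|$ by means of the Hardy-Littlewood-Sobolev inequality applied to the Riemann-Liouville kernel $x^{\alpha-1}/\Gamma(\alpha)$. The HLS hypothesis $1/q - \alpha \le 1/p'$ is exactly the assumption $1/p + 1/q \le 1 + \alpha$. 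At the endpoint $1/p+1/q = 1+\alpha$, where HLS in its standard form degenerates, one can instead use Young's convolution inequality for weak $L^r$ spaces (noting $x^{\alpha-1}\in L^{1/(1-\alpha),\infty}$), or simply exploit the boundedness of the interval $[0,t]$ to reduce to a slightly stronger interior estimate. The same estimate, applied with $\phi\equiv 1$ (or by swapping the roles of $\phi$ and $\psi$), shows that $I^\alpha\phi$ and $I^\alpha\psi$ exist as locally integrable functions.

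The hard part is handling the endpoint of the HLS condition and tracking the marginal integrabilities of the two factors; once the Fubini step is legitimate, the identity itself is immediate from the change of variables described above, and no further cancellation or regularization is needed.
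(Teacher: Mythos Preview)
The paper does not supply its own proof of this proposition; it simply records it as Equation~2.20 in Samko--Kilbas--Marichev and states the result. Your Fubini argument is the standard derivation (and is essentially what one finds in that reference): the identity is nothing but associativity of convolution, $\phi * (k_\alpha * \psi) = (k_\alpha * \phi) * \psi$ with $k_\alpha(x)=x^{\alpha-1}/\Gamma(\alpha)$, and the hypothesis $1/p+1/q\le 1+\alpha$ is precisely what makes the absolute triple integral finite so that Tonelli/Fubini applies.

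Two small remarks. First, the existence of $I^\alpha\phi$ and $I^\alpha\psi$ as locally integrable functions is even more elementary than you indicate: on $[0,t]$ one has $k_\alpha\in L^1$, so $I^\alpha\phi = k_\alpha * \phi \in L^1$ by Young's inequality for any $\phi\in L^1$, with no need for HLS. Second, your handling of the endpoint $1/p+1/q=1+\alpha$ is a bit heavier than necessary: since everything lives on the bounded interval $[0,t]$, one has $k_\alpha\in L^r$ for every $r<1/(1-\alpha)$, and the ordinary Young inequality with three factors gives finiteness whenever $1/p+1/q<1+\alpha$; at the endpoint, the weak-Young argument you mention (or HLS with $1<q<1/\alpha$) does the job. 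Either way, the core of your proof---unfold, swap, change variables---is correct and matches the cited source.
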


\noindent In this work, we mainly use the two following corollaries of Proposition \ref{fipp}.

\begin{corollary}
\label{fipp1}
Let $\phi\in L^r$, with $r>1$ and $\psi\in H^\beta$. Then, for any $\alpha<\beta$, $D^\alpha \psi$ exists, belongs to $ H^{\beta-\alpha}$ and
$$\int_0^t\phi (t-s)  \psi(s)ds=\int_0^tI^\alpha\phi (t-s)  D^\alpha \psi(s)ds.$$
\end{corollary}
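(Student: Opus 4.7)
The plan is to combine Propositions \ref{cfdif} and \ref{fipp} by means of the inversion identity $I^\alpha D^\alpha \psi = \psi$, which is valid in \cite{samko1993fractional} for H\"older functions vanishing at the origin (the case in every invocation of the corollary below).

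First, since $\psi \in H^\beta$ and $\alpha < \beta$, Proposition \ref{cfdif} directly gives that $D^\alpha \psi$ exists and lies in $H^{\beta-\alpha}$; in particular $D^\alpha \psi$ is bounded on $[0,1]$ and thus belongs to $L^q$ for every $q \in [1,\infty]$.

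Next, I would apply Proposition \ref{fipp} to the pair $(\phi, D^\alpha \psi)$ with $p = r > 1$ and $q = \infty$: the hypothesis $1/p + 1/q = 1/r \leq 1 \leq 1+\alpha$ is then immediate. This gives
$$\int_0^t \phi(t-s)\, I^\alpha D^\alpha \psi(s)\, ds = \int_0^t I^\alpha \phi(t-s)\, D^\alpha \psi(s)\, ds.$$
Using $I^\alpha D^\alpha \psi = \psi$ on the left-hand side then yields the claim.

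The main point to be careful about is the applicability of the inversion formula $I^\alpha D^\alpha \psi = \psi$: under the Riemann--Liouville conventions of \cite{samko1993fractional} this requires $\psi(0)=0$, and moreover Proposition \ref{cfdif} itself is stated under this same vanishing condition. In the two uses of this corollary in the paper, $\psi$ is either $Z_t = B_{X_t}$ or the fractional integral $I^\nu Z$ of such a process, both of which vanish at $0$, so both the premise of Proposition \ref{cfdif} and the inversion identity apply with no additional argument.
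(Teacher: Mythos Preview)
Your proposal is correct and is exactly the intended derivation: the paper states this result as a corollary of Propositions \ref{cfdif} and \ref{fipp} without giving a separate proof, and your argument---existence and H\"older regularity of $D^\alpha\psi$ from Proposition \ref{cfdif}, then Proposition \ref{fipp} applied to $(\phi,D^\alpha\psi)$ followed by the inversion $I^\alpha D^\alpha\psi=\psi$---is precisely how one reads off the corollary. Your remark that the hypothesis $\psi(0)=0$ (needed both for Proposition \ref{cfdif} and for the Riemann--Liouville inversion identity) is implicitly assumed and indeed satisfied in every application in the paper is a useful clarification.
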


\begin{corollary}
\label{fipp2}
Let $\phi$ be continuous and $\psi$ such that $x^\mu\psi(x)\in H^{\lambda}$ for some $\mu>0$. Then, for any $\alpha<\min(1-\mu,\lambda)$, $D^\alpha \psi$ exists, belongs to $L^r$ for some $r>1$ and
$$\int_0^t\phi (t-s)  \psi(s)ds=\int_0^tI^\alpha\phi (t-s)  D^\alpha \psi(s)ds.$$
\end{corollary}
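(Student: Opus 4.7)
The plan is to reduce the identity to Proposition \ref{fipp} applied to the pair $(\phi, D^\alpha \psi)$, via the fundamental relation $I^\alpha D^\alpha \psi = \psi$. Since $\phi$ is continuous on $[0,1]$ it lies in $L^\infty$, so as soon as $D^\alpha \psi$ is shown to belong to $L^r$ for some $r>1$, the condition $1/\infty + 1/r \leq 1+\alpha$ of Proposition \ref{fipp} is automatic. Thus the whole task reduces to constructing $D^\alpha \psi$ and controlling it in $L^r$ for some $r>1$.

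To this end, I would split
\[
\psi(x) \;=\; g(0)\,x^{-\mu} + x^{-\mu}\bigl(g(x)-g(0)\bigr) \;=:\; \psi_1(x) + \psi_2(x),
\]
where $g(x) = x^\mu \psi(x) \in H^\lambda$. The singular piece $\psi_1$ has an explicit fractional derivative
\[
D^\alpha \psi_1(x) \;=\; g(0)\,\frac{\Gamma(1-\mu)}{\Gamma(1-\mu-\alpha)}\,x^{-\mu-\alpha},
\]
which lies in $L^r$ for every $r<1/(\mu+\alpha)$; since $\alpha<1-\mu$ yields $\mu+\alpha<1$, some $r>1$ is admissible. For the remainder $\psi_2$, the function $\tilde g := g - g(0) \in H^\lambda$ satisfies $\tilde g(0)=0$, so $|\psi_2(x)|\le C x^{\lambda-\mu}$. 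I would then handle $D^\alpha \psi_2$ via the Marchaud representation
\[
D^\alpha \psi_2(x) \;=\; \frac{\psi_2(x)}{\Gamma(1-\alpha)\,x^\alpha} + \frac{\alpha}{\Gamma(1-\alpha)}\int_0^x \frac{\psi_2(x)-\psi_2(t)}{(x-t)^{1+\alpha}}\,dt,
\]
splitting the inner integral around $t=x/2$ and using the decomposition $\psi_2(x)-\psi_2(t) = x^{-\mu}(\tilde g(x)-\tilde g(t)) + (x^{-\mu}-t^{-\mu})\tilde g(t)$ together with the H\"older bound $|\tilde g(x)-\tilde g(t)|\le C(x-t)^\lambda$ and the power bound $|\tilde g(t)|\le C t^\lambda$. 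This should yield an estimate of the form $|D^\alpha \psi_2(x)|\le C x^{\lambda-\mu-\alpha}$ (up to a possible logarithmic factor), which lies in $L^r$ for some $r>1$ because $\mu+\alpha-\lambda<1$.

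With $D^\alpha \psi \in L^r$ in hand, the reconstruction identity $I^\alpha D^\alpha \psi = \psi$ holds, for instance because $\psi \in L^1$ (as $\mu<1$) and $I^{1-\alpha}\psi$ is continuous and vanishes at the origin. Proposition \ref{fipp} applied with $D^\alpha \psi$ in place of $\psi$ then gives
\[
\int_0^t \phi(t-s)\,\psi(s)\,ds \;=\; \int_0^t \phi(t-s)\,I^\alpha D^\alpha \psi(s)\,ds \;=\; \int_0^t I^\alpha\phi(t-s)\,D^\alpha\psi(s)\,ds,
\]
which is the sought identity. The main obstacle is the estimation of $D^\alpha \psi_2$: the Marchaud integral intertwines the singularity $x^{-\mu}$ with the H\"older-smooth factor $\tilde g$, and the resulting bound must be sharp enough to place $D^\alpha \psi_2$ in $L^r$ for an $r$ strictly greater than $1$ rather than merely $r=1$. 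It is precisely at this point that the strict inequalities $\alpha<1-\mu$ and $\alpha<\lambda$ are simultaneously used.
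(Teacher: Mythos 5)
Your proposal is correct and follows exactly the route the paper intends: the corollary is stated without proof as a direct consequence of Proposition \ref{fipp}, and your argument supplies the missing details, namely the existence of $D^\alpha\psi$ and its membership in $L^r$ for some $r>1$ via the splitting $\psi = g(0)x^{-\mu} + x^{-\mu}(g(x)-g(0))$ and the Marchaud representation, after which $\phi\in L^\infty$ makes the hypothesis $1/p+1/q\le 1+\alpha$ automatic. The only point stated loosely is the reconstruction $I^\alpha D^\alpha\psi=\psi$, which requires $I^{1-\alpha}\psi$ to be absolutely continuous with value $0$ at the origin rather than merely continuous; this holds here (both pieces of your decomposition are explicit fractional integrals of the $L^r$ functions you construct), so the gap is cosmetic.
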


\subsection{Convolution}

\noindent Finally, the next result is about the smoothness of the convolution of a power type function with a continuous function.

\begin{proposition}
\label{power}
Let $f$ be a differentiable function on $(0,1]$ such that for some $K>0$, $0<\beta<1$ and any $x$ in $(0,1]$, $$|f(x)| \leq \frac{K}{x^{\beta}}~and~|f'(x)| \leq \frac{K}{x^{\beta+1}},$$
and $g$ a continuous function on $[0,1]$. Then the convolution
$$f\ast g(t)=\int_0^t f(t-s)g(s)ds $$
has H\"older regularity $(1-\beta)$.
\end{proposition}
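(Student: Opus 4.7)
My plan is to control the increment $(f\ast g)(t+h) - (f\ast g)(t)$ for $0 \leq t < t+h \leq 1$ by $Ch^{1-\beta}$, with $C$ depending only on $K$, $\beta$ and $\|g\|_\infty$, the latter being finite since $g$ is continuous on $[0,1]$. The first step is to separate the two sources of the increment, namely the new mass contributed by the extension of the domain and the change in the kernel over the existing domain:
\begin{align*}
(f\ast g)(t+h) - (f\ast g)(t) = \int_t^{t+h} f(t+h-s) g(s)\,ds + \int_0^t \bigl(f(t+h-s) - f(t-s)\bigr) g(s)\,ds.
\end{align*}
Call these $A(t,h)$ and $B(t,h)$. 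The boundary term $A(t,h)$ is easy: the substitution $u = t+h-s$ together with $|f(u)| \leq K u^{-\beta}$ gives $|A(t,h)| \leq K\|g\|_\infty h^{1-\beta}/(1-\beta)$, where integrability at $0$ uses $\beta < 1$.

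For the bulk term $B(t,h)$ I would substitute $u = t-s$ to obtain $\int_0^t (f(u+h) - f(u)) g(t-u)\,du$ and split the domain of integration at $u = h$ (if $t \leq h$, only the first piece is present and the same bound applies). On $[0,\min(h,t)]$ I would use the crude estimate $|f(u+h) - f(u)| \leq |f(u+h)| + |f(u)| \leq 2K u^{-\beta}$, which integrates to a term of order $h^{1-\beta}$. On $[h,t]$ the mean value inequality combined with the monotonicity of $v \mapsto v^{-\beta-1}$ yields $|f(u+h) - f(u)| \leq \int_u^{u+h} |f'(v)|\,dv \leq Kh u^{-\beta-1}$, and integrating this bound from $h$ to $t$ produces at most $(K/\beta) h^{1-\beta}$, uniformly in $t \in [0,1]$. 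Summing the three contributions gives the claimed Hölder bound.

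The only mild obstacle I anticipate is the singularity of $f$ at the origin, which forces the dyadic split at scale $u \sim h$: to the left of $h$ the derivative estimate $|f'| \leq K v^{-\beta-1}$ fails to be integrable at $0$, so one must fall back on the sup bound on $f$ itself; to the right of $h$ the sup bound alone would produce only an $h^{-\beta}(t-h)$ contribution, which loses the sharp exponent, so the mean value estimate becomes essential. The two bounds balance precisely at $u \sim h$, producing the correct Hölder exponent $1-\beta$.
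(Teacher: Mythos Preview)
Your argument is correct and is essentially identical to the paper's own proof: after separating the boundary term $A(t,h)$, both you and the paper split the remaining integral at the point corresponding to $t-s=h$ (equivalently $u=h$ after your substitution), using the sup bound on $f$ near the singularity and the derivative bound away from it. The constants differ cosmetically, but the decomposition and the estimates are the same.
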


\begin{proof}
We write $G$ for the supremum of $|g|$ and we split $f\ast g(t+h)-f\ast g(t)$ into the three following terms:
\begin{eqnarray*}
f\ast g(t+h)-f\ast g(t)&=& \int_{t}^{t+h} f(t+h-s)g(s)ds \\
&+& \int_{t-h}^{t} \big(f(t+h-s)-f(t-s)\big)g(s)ds\\
&+& \int_{0}^{t-h} \big(f(t+h-s)-f(t-s)\big)g(s)ds.\\
\end{eqnarray*}
The first term is bounded by $KG\frac{h^{1-\beta}}{1-\beta}$, the second by
$KG(1+\frac{1}{1-\beta})h^{1-\beta}$ and the third by
$$G \int_{0}^{t-h} \int_{t-s}^{t+h-s}|f'(u)| duds\leq GK\int_{0}^{t-h} h \frac{1}{(t-s)^{1+\beta}}ds\leq \frac{2}{\beta}GK h^{1-\beta}.$$
\end{proof}

\section*{Acknowledgements}
We thank Jean Jacod and Nakahiro Yoshida for helpful discussions.

\bibliographystyle{abbrv}
\bibliography{bibli_HQ_final}

\end{document}